\newtheorem{thm}{Theorem}
\newtheorem{cor}[thm]{Corollary}
\DeclareMathAlphabet{\mathsfsl}{OT1}{cmss}{m}{sl}
\renewcommand{\phi}{\varphi}
\newcommand{\R}{\mathbb{R}}
\newcommand{\argmin}{\operatorname*{arg\; min}}
\newcommand{\Expect}{\operatorname{\mathbb{E}}}
\def\reals{\mathbb{R}}
\def\reals{\mathbb{R}}
\def\b0{\mathbf{0}}
\def\calI{\mathcal{I}}
\def\sign{\mathrm{sign}}
\begin{document}

\title{Element-wise Estimation Error of Generalized Fused Lasso}

\author{\name Teng Zhang  \thanks{TZ is supported by NSF Grant CNS-1818500}   \email teng.zhang@ucf.edu \\
       \addr Department of Mathematics\\
       University of Central Florida\\
       Orlando, FL 32816, USA       \AND \name Sabyasachi Chatterjee  \thanks{SC is supported by NSF Grant DMS-1916375}      \email sc1706@illinois.edu    \\
       \addr Department of Statistics\\
       University of Illinois at Urbana Champaign\\
       Champaign, IL 61820, USA
}

\editor{}

\maketitle

\begin{abstract}
The main result of this article is that we obtain an elementwise error bound for the Fused Lasso estimator for any general convex loss function $\rho$. 
We then focus on the special cases when either $\rho$ is the square loss function (for mean regression) or is the quantile loss function (for quantile regression) for which we derive new pointwise error bounds. Even though error bounds for the usual Fused Lasso estimator and its quantile version have been studied before; our bound appears to be new. This is because all previous works bound a global loss function like the sum of squared error, or a sum of huber losses in the case of quantile regression in~\cite{padilla2021risk}. Clearly, element wise bounds are stronger than global loss error bounds as it reveals how the loss behaves locally at each point. Our element wise error bound also has a clean and explicit dependence on the tuning parameter $\lambda$ which informs the user of a good choice of $\lambda.$ In addition, our bound is nonasymptotic with explicit constants and 
is able to recover almost all the known results for Fused Lasso (both mean and quantile regression) with additional improvements in some cases. 
\end{abstract}

\begin{keywords}
Generalized Fused Lasso, Nonparametric Quantile Regession,  Total Variation Denoising, Pointwise Risk Bounds, Adaptive Risk Bounds, Nonasymptotic Risk Bounds, Law of Iterated Logarithm.
\end{keywords}

\section{Introduction}\label{sec:fused}
Fused Lasso or (1D) Total Variation Denoising is arguably one of the most fundamental signal processing methods and is a basic tool for a vast array of data analysis problems. The literature studying the statistical accuracy of the Fused Lasso method is vast; see \cite{mammen1997}, \cite{Harchaoui2010}, \cite{dalalyan2017},\cite{Lin2016,Lin20162},\cite{Ortelli2018},~\cite{ortelli2021prediction},\\~\cite{Guntuboyina2017}, \cite{ortelli2021prediction}, \cite{padilla2021risk} to name a few. In this paper, we revisit this issue and take a closer look at the statistical error of the Fused Lasso estimator with a general convex loss function. Specifically, given a data vector $y \in \R^n$, we study the following estimator for a tuning parameter $\lambda \geq 0$,

\begin{equation}\label{eq:optimization}
\hat{\theta} =\argmin_{\theta \in \reals} \sum_{i=1}^n\rho(y_i - \theta_i)+\lambda\sum_{i=1}^{n-1}|\theta_i- \theta_{i+1}|,
\end{equation}
where $\rho: \R \rightarrow \R$ is a given convex function. Obviously, different choices of $\rho$ yield different estimators. The most popular choice of $\rho$ is the square function $\rho(x) = \frac{x^2}{2}$ which corresponds to the usual Fused Lasso or the 1D Total Variation Denoising estimator. In this paper, we will also be interested in the quantile regression version of Fused Lasso where $\rho$ is the quantile check function defined as 

\begin{equation}
\rho(x)=\begin{cases}\tau |x|,\,\,\text{if $x\geq 0$}\\ (1-\tau)|x|,\,\,\text{if $x< 0$}\end{cases}
\end{equation}
where $\tau$ is any fixed quantile level between $0$ and $1.$



\subsection{Informal Theorem and Proof Strategy}
Throughout the paper, we focus on the case when the true signal $\theta^*$ is piecewise constant with an unknown number of pieces and unknown changepoints. 
The main result of this paper is a general theorem giving pointwise error bounds for a certain transformation of $\hat{\theta}_i - \theta^*_i$ for any $1 \leq i \leq n$; see Theorem~\ref{thm:general}. To the best of our knowledge, such a theorem handling a general loss function $\rho$ and providing pointwise error bounds for Fused Lasso is new. 
We then apply this theorem to the particular cases of quantile and mean regression. For example, for the particular case of mean regression, our bound can be written informally as
\begin{theorem}[Informal]\label{thm:informal}
	Fix any $i$ such that $1 \leq i \leq n$. Let $d_i$ denote the distance of $i$ to the nearest change point of $\theta^*$ and $l_i$ denote the length of the constant piece of $\theta^*$ containing $i.$ With high probability, we have
	\begin{equation*}
	|\hat{\theta}_i - \theta^*_i| \lesssim \frac{1}{\sqrt{d_i}} + \frac{1}{\lambda} + \frac{\lambda}{l_i}.
 	\end{equation*}
 	Here, the $\lesssim$ notation hides certain $\log \log$ factors.
\end{theorem}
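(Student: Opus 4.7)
The plan is to specialize the abstract pointwise bound of Theorem~\ref{thm:general} to $\rho(x)=x^2/2$ and trace how each of the three terms arises. For the square loss the derivative $\rho'(x)=x$ is linear, so the generic transformation of $\hat\theta_i-\theta^*_i$ that appears in Theorem~\ref{thm:general} reduces to $\hat\theta_i-\theta^*_i$ itself, and the three terms in the informal bound correspond to the three natural contributions in any KKT-based analysis of Fused Lasso: noise averaging on a window around $i$, KKT/dual slack, and constant-piece bias.

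Concretely, I would first write the stationarity condition for~\eqref{eq:optimization}, yielding at each interior coordinate the identity $\hat\theta_i-y_i=\lambda(u_{i-1}-u_i)$, where $(u_k)\subset[-1,1]$ are dual subgradients with boundary values $u_0=u_n=0$. Telescoping gives the classical partial-sum bound $\bigl|\sum_{j=a}^{b}(y_j-\hat\theta_j)\bigr|\leq 2\lambda$ on every sub-interval. On a window $W$ contained entirely in the constant piece $\{\theta^*_j=c\}$, this becomes $|\mathrm{avg}_W(\hat\theta-c)|\leq 2\lambda/|W|+|\mathrm{avg}_W(\epsilon)|$. A law-of-the-iterated-logarithm-type bound, taken uniformly over all sub-intervals, furnishes $|\mathrm{avg}_W(\epsilon)|\lesssim\sqrt{\log\log|W|/|W|}$ with high probability, which is the source of the hidden $\log\log$ factors.

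Two window choices then produce the three explicit terms. Taking $W=[i-d_i,i+d_i]$, which is guaranteed to lie inside the piece by the definition of $d_i$, bounds $|\mathrm{avg}_W(\hat\theta-c)|$ by $\tfrac{\lambda}{d_i}+\tfrac{1}{\sqrt{d_i}}$, contributing the $1/\sqrt{d_i}$ term; taking $W$ to be the full constant piece of length $l_i$ gives an averaged bound of order $\tfrac{\lambda}{l_i}+\tfrac{1}{\sqrt{l_i}}$, contributing the $\lambda/l_i$ term. The remaining $1/\lambda$ arises from upgrading these averaged bounds to a pointwise bound: one chooses a window of size $\asymp\lambda^2$ in which the KKT slack $\lambda/|W|$ and the Gaussian fluctuation $1/\sqrt{|W|}$ are balanced, and the balanced value $1/\lambda$ dominates whenever $\lambda^2\leq l_i$.

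The main obstacle is precisely this last step, promoting \emph{averaged} deviation bounds on the constant piece into a \emph{pointwise} bound on $\hat\theta_i-c$. The argument is a sandwich: one squeezes $\hat\theta_i$ between averages of $\hat\theta$ taken over two windows flanking $i$, using dual feasibility $|u_k|\leq 1$ to control how much $\hat\theta$ can swing inside each window, and the $1/\lambda$ factor is the price paid for asymmetry between these flanking windows. In the abstract Theorem~\ref{thm:general} this sandwich is carried out at the level of the transformed variable, but for the square loss the transformation is the identity, so the sandwich directly controls $|\hat\theta_i-c|$; a union bound over $i\in\{1,\ldots,n\}$ then delivers the high-probability uniform statement claimed in Theorem~\ref{thm:informal}.
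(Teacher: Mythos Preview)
Your high-level decomposition into three contributions is right, and your intuition about balancing a window of size $\asymp\lambda^2$ to obtain the $1/\lambda$ term matches exactly how it arises in the paper (the bound $\max_{s}\bigl(\sigma\sqrt{\cdot/s}-\lambda/s\bigr)\lesssim \sigma^2/\lambda$). But the core step---upgrading averaged bounds on $\hat\theta$ over windows to a pointwise bound on $\hat\theta_i$---does not go through as you describe. Dual feasibility $|u_k|\leq 1$ only controls partial sums of $y-\hat\theta$, not the oscillation of $\hat\theta$ itself; there is no reason $\hat\theta_i$ must lie between the averages of $\hat\theta$ on two flanking windows, so the ``sandwich'' fails. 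Concretely, if $\hat\theta$ has a narrow spike at $i$, the flanking averages can both be close to $c$ while $\hat\theta_i$ is large, and nothing in your telescoping identity rules this out.

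The paper sidesteps this difficulty entirely; it never passes through averages of $\hat\theta$. The key deterministic step (Lemma~\ref{lemma:intermediate}) is a one-sided perturbation argument: assuming $\hat\theta_i\geq\alpha$, take the \emph{maximal} interval $[s,t]\ni i$ on which $\hat\theta_j\geq\hat\theta_i$; lowering $\hat\theta$ by $\eta$ on $[s,t]$ cannot decrease the objective, which forces $\sum_{j=s}^t\rho'_+(y_j-\hat\theta_j)\geq 2\lambda$ in the interior case. Since $\hat\theta_j\geq\alpha$ on $[s,t]$ and $\rho'_+$ is nondecreasing, this implies $\sum_{j=s}^t\rho'_+(y_j-\alpha)\geq 2\lambda$, i.e.\ for square loss $\sum_{j=s}^t(y_j-\alpha)\geq 2\lambda$. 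Thus $\{\hat\theta_i\geq\alpha\}$ is contained in the event ``some interval containing $i$ has large noise sum relative to $2\lambda$,'' and the LIL then bounds that event; the four cases (interior, touching left/right boundary, full piece) give the $1/\lambda$, $1/\sqrt{d_i}$, and $\lambda/l_i$ terms respectively. A second subtlety your sketch omits is the reduction to one piece: the paper rewrites $\hat\theta$ restricted to the constant block as the solution of an auxiliary problem with boundary terms $|\theta_1-a|+|\theta_m-b|$, and crucially the event on the right of Lemma~\ref{lemma:intermediate} is \emph{uniform in $a,b$}, which is what decouples the analysis from the data outside the block.
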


This bound reveals a few things:
 
\begin{enumerate}
	\item With a theoretical choice of $\lambda = \sqrt{l_i}$ one obtains a $\tilde{O}(\frac{1}{\sqrt{d_i}})$ rate of convergence for $|\hat{\theta}_i - \theta^*_i|.$ This means that within any level set of $\theta^*$, for most points in the interior, the rate of convergence is $\tilde{O}(\frac{1}{\sqrt{l_i}})$. Note that an oracle who knows the the level set of $\theta^*$ containing the index $i$ would just estimate by the mean within this block and would incur a rate of convergence exactly $O(\frac{1}{\sqrt{l_i}})$ as well.

  \item At the boundary of the level sets the rate of convergence can be bad. For instance, at the change points themselves, the above bound gives a $\tilde{O}(1)$ rate of convergence. This is validated by our simulations where we indeed find that the TVD estimator is inconsistent at the true change points.

  \item Consider the typical and realistic case where the lengths of all the $k$ pieces of $\theta^*$ are comparable up to a constant factor; equivalently each of the lengths are roughly $O(n/k)$. In this case, the optimal choice of $\lambda$ reduces to $\tilde{O}(\sqrt{n/k})$ which is what is recommended in the existing literature. Under this choice, squaring the bound we get $\tilde{O}(1/d_i)$ which when summed over all $i$ gives the mean squared error bound $\tilde{O}(k/n) \log n$ rate which is known to be unimprovable for any given $k \geq 1.$ This is how one can recover existing MSE bounds from our bound. 
  \end{enumerate}

For a general $\rho$ function and in particular quantile regression; we obtain a similar bound as in Theorem~\ref{thm:informal} except that the left hand side of our bound is a certain transformation of $|\hat{\theta}_i - \theta^*_i|.$ This transformation can be thought of as the natural loss function to measure error in our problem. If one desires, one can invert this transformation to get a bound on $\hat{\theta}_i - \theta^*_i$ which is what we have carried out for quantile regression; see Section~\ref{sec:quantile}.

The proof of Theorem~\ref{thm:general} relies on a few high level steps which we mention here for the convenience of the reader.  

\begin{enumerate}
	\item \textbf{Reduction to one piece}:  
	Fix any $i \in [1:n].$ Consider the constant piece of $\theta^*$; say $[e:l]$ containing $i$. The Fused Lasso solution $\hat{\theta}_i$ is a function of the entire data vector $y.$ However, \textit{Lemma~\ref{lemma:intermediate} shows that it is possible to bound $\hat{\theta}_i$ in terms of only the data $y_{e:l}$ in the block $[e:l].$} This is perhaps the most crucial observation facilitating our proof. This makes it possible to 
	analyze each constant piece of $\theta^*$ separately.  
	
	\item \textbf{Using subgradient information for a constant piece}: Reducing attention to the constant piece $[e:l]$ containing the index $i$ as in above, Lemma~\ref{lemma:intermediate} then uses appropriately chosen subgradient information at the optima $\hat{\theta}$ to derive a bound on $\hat{\theta}_i.$
	
	\item \textbf{Bound averages over intervals uniformly by a nonasymptotic law of iterated logarithm}: The bound given by Lemma~\ref{lemma:intermediate} can be expressed in terms of a maximum of a sum/average of i.i.d random variables over all possible intervals containing $i.$ We then use a nonasymptotic law of iterated logarithm to further bound this term. This is carried out in Proposition~\ref{prop:inter}. 
\end{enumerate}

\subsection{Related Work}
The literature on total variation penalized estimators and Fused Lasso is quite vast; e.g. see ~\cite{Rudin1992,Tibshirani2005,steidl2006splines,Tibshirani2008,rinaldo2009} and references therein. The estimation error of 1D total variation denoising or Fused Lasso has been well-studied in a variety of papers. These works analyze the convergence rate of the mean squared error $\frac{1}{n}\sum_{i=1}^n(\hat{\theta}_i-\theta_i^*)^2$ as $n\rightarrow\infty$. 

One of the earlier works in this area, Mammen and Geer \cite[Theorem 9]{mammen1997}, obtain rates of convergence for a least-squares estimator penalized by a continuous version of total variation of the $k$-th derivative of the regression function (locally adaptive regression splines). When $k=1$, this estimator reduces to the continuously penalized version of the Fused Lasso estimator. In particular, for $k = 1$, Mammen and Geer establish a rate of $O(n^{-2/3})$, which is the minimax rate in the space of functions of bounded total variation~\cite{10.1093/biomet/81.3.425}. This work turned out to be a precursor to the study of Trend Filtering estimators (discretely penalized version of locally adaptive regression splines),~\cite{kim2009ell_1}~\cite{tibshirani2014},~\cite{tibshirani2020divided} of general orders, of which the Fused Lasso is a special case.

There has been quite a lot of work recently on establishing a \textit{fast rate} (with a near parametric rate of convergence $O(K/n)$) for the usual mean regression version of Trend Filtering when the true underlying signal $\theta^*$ is assumed to be piecewise constant/polynomial (actually discrete spline) with $K$ change points, see~\cite{Harchaoui2010},~\cite{dalalyan2017},~\cite{Lin2016,Lin20162},~\cite{Ortelli2018},~\cite{ortelli2021prediction},\\~\cite{Guntuboyina2017}. The quantile regression version of Trend Filtering has been studied a lot less. Recently, \cite{padilla2021risk} has established a similar fast rate bound for quantile trend filtering.

All the above mentioned existing works analyze the asymptotic rate of  a global loss such as mean squared error. Secondly, these methods only work with a specific choice of $\rho$. There is no existing result which gives a unified treatment for any convex loss function $\rho$ and gives pointwise error bounds. In this paper we attempt to fill this gap in the literature by establishing a nonasymptotic upper bound of the elementwise loss $|\hat{\theta}_i-\theta_i^*|$, which is a much more precise characterization of the estimation errors. In addition, our bound can be applied to any choice of convex loss function $\rho$. Our elementwise bound also implies tight bounds for the global loss: our global loss bound on the quantile fused lasso estimator improves over the current best result and our global loss bound on the fused lasso estimator is slightly worse than the current best result by a factor of $\log n$. More detailed comparisons of our bounds with existing ones in the literature are given in Sections~\ref{sec:quantile},~\ref{sec:mean}.


There is another strand of results on Trend Filtering which establishes \textit{slow rates}, e.g the Fused Lasso attains the $O(n^{-2/3})$ rate for bounded variation functions.
For example, slow rate for a general $r$th order trend filtering estimator has been shown in \cite{tibshirani2014} which imply that it is  minimax rate optimal for the set of functions whose $r$-th derivative is of bounded variation. We do not focus on slow rates in this paper. We note that once fast rates are available, slow rates will naturally follow from an oracle risk bound known for Fused Lasso; e.g see Theorem $3.4$ in~\cite{ortelli2021prediction}. Similar oracle risk bounds should be proveable for Quantile Regression which we leave for future work.



%
%
%

\subsection{Outline}
The rest of this paper is structured as follows. In Section~\ref{sec:general}, we state our main result in Theorem~\ref{thm:general}. We then give a complete proof of Theorem~\ref{thm:general} in Section~\ref{sec:proof}. In Section~\ref{sec:quantile}, we focus on the Quantile Fused Lasso estimator and use Theorem~\ref{thm:general} to derive new and improved bounds. Similarly, in Section~\ref{sec:mean}, we focus on the usual Fused Lasso estimator and explore consequences of Theorem~\ref{thm:general}. Detailed comparisons of our bounds with existing ones in the literature are given in both Section~\ref{sec:quantile} and Section~\ref{sec:mean}.



\section{A General Bound}\label{sec:general}

\subsection{Notations}
Let $\rho: \R \rightarrow \R$ be a convex function. A standard fact about convex functions is that the left and right derivatives exist at all points. We denote the left and right derivative function of $\rho$ by $\rho'_{-}$ and $\rho'_{+}.$ Both $\rho'_{-}$ and $\rho'_{+}$ are non decreasing functions on $\R.$

We denote the set of positive integers at most $n$ by $[1:n]$ for any positive integer $n.$
For any vector $\theta \in \R^n$, using standard terminology, we term the corresponding penalty term in the objective function in~\eqref{eq:optimization} as its \textit{total variation} denoted by
\begin{equation*}
TV(\theta) = \sum_{i=1}^{n-1}|\theta_i- \theta_{i+1}|.
\end{equation*}

In this paper, we follow  \cite[Definition 2.2]{wainwright2019high}  and say that a random variable $X$ is sub-Gaussian with parameter $\sigma$, if $\Expect[e^{\lambda(X-\Expect[X])}]\leq e^{\sigma^2\lambda^2/2}$ for all $\lambda\in\mathbb{R}$.

\subsection{A General Elementwise Estimation Error Bound}
We now state a general elementwise error bound for the fused lasso estimator defined in~\eqref{eq:optimization}. Suppose the observations $\{y_i\}_{i=1}^n$ are generated from
\begin{equation}\label{eq:model}
y_i=\theta^*_i+\epsilon_i, 1\leq i\leq n,\,\,\,\, \theta^*_i, y_i,\epsilon_i\in\reals,
\end{equation}
where $\{\epsilon_i\}_{i=1}^n$ represents the noise and $\{\theta^*_i\}_{i=1}^n$ is a piecewise constant mean sequence with $K$ pieces. Then there exists changepoints $\{n_k\}_{k=1}^K$ such that $1=n_1<n_2<\cdots<n_K<n$ and for any $1\leq k\leq K$, $
\theta^*_{n_k}=\theta^*_{n_k+1}=\cdots=\theta^*_{n_{k+1}-1},$. Throughout the paper, we let $n_{K+1}=n+1$ for simplicity of notation.

\textbf{Assumption A}: The errors $\epsilon_i$ are i.i.d with common distribution $D$ such that $\rho'_{+}(\epsilon_i-t)$ and $\rho'_{-}(\epsilon_i-t)$ are sub-Gaussian random variables with parameter $\sigma$ for any $t\in\reals$.

\bigskip

We now introduce the \textit{natural} loss function in our problem. Define the functions $L^{+},L^{-}: \reals \rightarrow \reals^{+}$ as follows:
\begin{equation*}
\begin{cases}
L^{+}(t) = \Expect_{\epsilon \sim D} \rho'_{+}(\epsilon - t) \\
L^{-}(t) = \Expect_{\epsilon \sim D} \rho'_{-}(\epsilon - t).
\end{cases}
\end{equation*}

We are now ready to state our general element-wise estimation error bound.

\begin{thm}\label{thm:general}[General Elementwise Error Bound]

	Let $\hat{\theta}$ denote the fused lasso estimator defined in~\eqref{eq:optimization} when $y = \theta^* + \epsilon$ is the input data. Suppose \textbf{Assumption A} holds. Fix any $i \in [1:n].$  Then the following is true for any $\delta\in (0,\log 2/e)$:
	\begin{enumerate}
		\item
		$\Pr\Big(L^{+}(\hat{\theta}_i - \theta^*_i) \leq -B_{i,\delta} \Big)\leq \Big(1 + \frac{24}{(\log 2)^2}\Big)\delta^2$,
		\item
		$\Pr\Big(L^{-}(\hat{\theta}_i - \theta^*_i) \geq B_{i,\delta} \Big)\leq \Big(1 + \frac{24}{(\log 2)^2}\Big)\delta^2$,
	\end{enumerate}	
	where the number $B_{i,\delta} > 0$ is defined as 
	\begin{equation}\label{eq:boundterm}
	B_{i,\delta} = 4\sigma\!\!\left(\!\sqrt{\frac{\log\log 2\max(3,d_i)}{{\max(3,\!d_i)}}}\!+\!\sqrt{\frac{\log \!\frac{1}{\delta}}{d_i}}\!\!\right)\! + 4 \sigma^2\frac{\log\log(2m_{k(i)}\!) \!+\! \log \!\frac{1}{\delta}}{\lambda} + \frac{2\sqrt{m_{k(i)}\sigma^2\log \!\frac{1}{\delta}}\!+\!2\lambda}{m_{k(i)}}.
	\end{equation}
	Here, we define the positive integer $k(i) \in [1:K]$ such that  the $i$-th point lies in the $k(i)$-th constant piece of $\theta^*$, i..e,,  $n_{k(i)}\leq i\leq n_{k(i)+1}$, and $d_i=\min(i+1-n_{k(i)}, n_{k(i)+1}-i)$ is its distance to the nearest change point.	Moreover, $m_{k(i)}$ denotes the length of the constant piece of $\theta^*$ which contains $i.$
\end{thm}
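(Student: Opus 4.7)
My plan is to prove the two inequalities symmetrically; it suffices to focus on the first one, since the bound on $L^{-}(\hat{\theta}_i - \theta^*_i)$ follows by considering the convex function $\tilde\rho(x) \defeq \rho(-x)$ or by a direct mirror argument that swaps $\rho'_+$ and $\rho'_-$. Fix $i$ and denote the constant piece of $\theta^*$ containing $i$ by $[e,l] = [n_{k(i)}, n_{k(i)+1}-1]$, so that $m_{k(i)} = l-e+1$ and $d_i = \min(i-e+1, l-i+1)$. Since on this piece $\theta^*_j = \theta^*_i$, for any scalar $t$ we have $\rho'_+(y_j - \hat{\theta}_i) = \rho'_+(\epsilon_j - (\hat{\theta}_i - \theta^*_i))$, whose expectation is exactly $L^{+}(\hat{\theta}_i - \theta^*_i)$; the sub-Gaussian Assumption~A therefore gives Bernstein/Hoeffding control of the corresponding empirical averages.

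The first step is to invoke Lemma~\ref{lemma:intermediate}, which \emph{localizes} the problem to the piece $[e,l]$: it yields a deterministic inequality expressing $L^{+}(\hat{\theta}_i-\theta^*_i)$ (up to noise) in terms of averages of $\rho'_{+}(\epsilon_j - s)$ over sub-intervals of $[e,l]$ that contain $i$, together with $\lambda$-dependent boundary terms. The intuition comes from the KKT conditions of~\eqref{eq:optimization}: for any level set $[a,b]$ of $\hat{\theta}$ one has $\bigl|\sum_{j=a}^{b}\rho'_\pm(y_j-\hat{\theta}_j)\bigr|\leq 2\lambda$, which on the piece $[e,l]$ translates into an upper bound involving a supremum of empirical averages of $\rho'_+(\epsilon_j - s)$ over intervals around $i$ plus $\lambda/(\text{interval length})$ correction terms. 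The $2\lambda/m_{k(i)}$ tail in $B_{i,\delta}$ and the $2\sqrt{m_{k(i)}\sigma^2\log(1/\delta)}/m_{k(i)}$ term are exactly these ``full piece'' contributions (KKT on $[e,l]$ plus sub-Gaussian concentration of a single average of length $m_{k(i)}$).

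Next I apply Proposition~\ref{prop:inter} to handle the random averages uniformly over the intervals containing $i$. Splitting each such average into its mean plus a centered sub-Gaussian term, the mean part contributes $L^{+}(\hat{\theta}_i-\theta^*_i)$ (by the identity noted above), and the centered part must be controlled uniformly. Intervals of length $\sim d_i$ give rise to the $\sqrt{\log\log(2\max(3,d_i))/\max(3,d_i)}$ term via a nonasymptotic law of the iterated logarithm, while the additional $\sqrt{\log(1/\delta)/d_i}$ term is the standard sub-Gaussian deviation at scale $d_i$. The remaining $4\sigma^2(\log\log(2m_{k(i)}) + \log(1/\delta))/\lambda$ summand is what the proposition extracts when balancing these interval-wise maxima against $\lambda/(\text{interval length})$ correction terms by peeling over dyadic length scales up to $m_{k(i)}$. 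A union bound over the $O(1)$ separate events involved (two endpoints for the LIL, one concentration event at the full-piece scale, one at the $d_i$ scale) delivers the probability $(1 + 24/(\log 2)^2)\delta^2$.

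The main obstacles, in decreasing order of difficulty, are: (i) establishing the reduction to one piece in Lemma~\ref{lemma:intermediate} in a way that cleanly isolates $L^{+}(\hat{\theta}_i-\theta^*_i)$ on one side while keeping an explicit $\lambda$-dependence on the other — this is where the sub-gradient calculus must be done carefully so that the intervals appearing are precisely those containing $i$ and lying inside $[e,l]$; and (ii) obtaining the nonasymptotic LIL in Proposition~\ref{prop:inter} with the correct $\log\log$ rate and with constants clean enough to yield the $\delta^2$ tail after union bounding. The KKT $\pm 2\lambda$ slack and the identity relating $\rho'_+(y_j-\hat{\theta}_i)$ to $\rho'_+(\epsilon_j-(\hat{\theta}_i-\theta^*_i))$ together ensure that every term in~\eqref{eq:boundterm} is traceable to a concrete step: $d_i$-terms from the LIL localized near $i$, $m_{k(i)}$-terms from the piece-wide average, and $\lambda$-terms from the KKT boundary slack and the peeling argument.
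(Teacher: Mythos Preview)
Your proposal is correct and follows the paper's strategy: reduce to the constant piece via Lemma~\ref{lemma:intermediate}, then split into the four interval-endpoint cases (interior, left boundary, right boundary, full piece) and control each using the nonasymptotic LIL (Lemma~\ref{lem:lil}) and Hoeffding, exactly as in Proposition~\ref{prop:inter}. Two minor execution details differ from the paper: (i) the paper fixes a deterministic threshold $\alpha$ and bounds $\Pr(\hat\theta_i-\theta^*_i>\alpha)$ whenever $L^{+}(\alpha)\le -B_{i,\delta}$, only afterward converting to a statement about $L^{+}(\hat\theta_i-\theta^*_i)$ via monotonicity of $L^{+}$ --- this sidesteps the circularity in your sketch where the ``mean part'' is written as $L^{+}(\hat\theta_i-\theta^*_i)$ with the random $\hat\theta_i$ inside the concentration step; and (ii) the $4\sigma^2(\log\log(2m_{k(i)})+\log(1/\delta))/\lambda$ term arises not from dyadic peeling but from the one-line inequality $\max_{x\ge 1}\bigl(u/\sqrt{x}-v/x\bigr)\le u^2/(4v)$ applied to the LIL deviation minus the $\lambda/(\text{length})$ correction.
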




We now make some remarks explaining and discussing the above theorem.
\begin{remark}
	As we will show subsequently, for both the mean regression and the quantile regression case, the $\rho$ function will satisfy the subgaussianity assumption in \textbf{Assumption A} on the random variables $\rho'_{+}(\epsilon - t)$ and $\rho'_{-}(\epsilon - t)$ under natural assumptions on the error random variable $\epsilon.$ In general, if $\rho$ is lipschitz (true for quantile regression) then $\rho'_{+},\rho'_{-}$ are bounded and thus automatically subgaussian without any tail decay assumptions on $\epsilon$ itself. In such cases, the above theorem holds for arbitrary heavy tailed $\epsilon$, a point discussed more in Section~\ref{sec:quantile}.
\end{remark}


\begin{remark}
	Theorem~\ref{thm:general} says that the functions $L^{+}$ and $L^{-}$ are the \textit{natural} loss functions (to measure the performance of $\hat{\theta}$) in our setup, in the sense that with these loss functions one can write such a unifying and general bound for any $\rho$ satisfying \textbf{Assumption A}. Due to the possible non symmetry of the convex function $\rho$ around $0$, we need both the loss functions $L^{+}$ and $L^{-}$ to capture the deviation of $\hat{\theta}$ from $\theta^*$ on both sides. 
	\end{remark}

\begin{remark}
	Operationally, the above theorem also gives an upper or lower bound on any particular element $\hat{\theta}_i - \theta^*_i$ as we now explain. The first assertion in Theorem $1$ says that with high probability the event $\{L^{+}(\hat{\theta}_i - \theta^*_i) \geq -B_{i,\delta}\}$ holds. Since $L^{+}$ is a monotone non increasing function and further, suppose it is strictly monotone in a neighborhood around $0$, then we can invert $L^{+}$ locally and thus we can rewrite the above event as $\{(\hat{\theta}_i - \theta^*_i) \geq (L^{+})^{-1}\left(-B_{i,\delta}\right)\}.$ Since $B_{i,\delta}$ will typically be a very small number and $(L^{+})^{-1}(0) = 0$ (true for both mean and quantile regression), then $(L^{+})^{-1}\left(-B_{i,\delta}\right)$ will also be a small number. Thus, by inverting $L^{+}$, we can get a lower bound on $\hat{\theta}_i - \theta^*_i.$ By a similar logic, by inverting $L^{+}$, one can obtain an upper bound on $\hat{\theta}_i - \theta^*_i.$


\end{remark}

\begin{remark}
	For ease of interpretation the reader can read $B_{\delta,i}$ as a sum of three terms $\frac{1}{\sqrt{d_i}} + \frac{1}{\lambda} + \frac{\lambda}{m_{k(i)}}.$ This reveals that the dependence of the error $\hat{\theta}_i - \theta^*_i$ on the local structure of $\theta^*$ around $\theta^*_i$ is explicit. In particular, this dependence is only through $d_i$, the distance of $i$ to the nearest change point of $\theta^*$ and $m_{k(i)}$, the length of the constant piece of $\theta^*$ containing $i.$  The dependence of the bound on the tuning parameter $\lambda$ is also clean and explicit.
	\end{remark}

\begin{remark}
	Another notable aspect of the bound given in the above theorem is that all the constants are explicit and small. This makes our bound truly nonasymptotic.
\end{remark}

\section{Proof of the General Bound}\label{sec:proof}
\subsection{An intermediate optimization problem and its elementwise estimation error}
We first present an important intermediate lemma that concerns the optimization problem \eqref{eq:optimization1}. Compared with the objective function in \eqref{eq:optimization}, the objective function in this optimization problem is similar with two additional terms $|\theta_1-a|$ and $|\theta_m-b|$ for any arbitrary real numbers $a,b.$ In particular, Lemma~\ref{lemma:intermediate} is completely deterministic and establishes a subset inclusion relation.

\begin{lemma}\label{lemma:intermediate}

Fix a data vector $y \in \R^m.$ For any two real numbers $a,b$, consider the solution to the $m$ dimensional optimization problem 
\begin{equation}\label{eq:optimization0.1}
\tilde{\theta}^{(a,b)} =\argmin_{\theta \in \R^m} G^{(a,b)}(\theta),
\end{equation}
where \[
 G^{(a,b)}(\theta)=\sum_{i=1}^m\rho(y_i-\theta_i)+\lambda \left(|\theta_1-a|+|\theta_m-b|+ TV(\theta)\right)\]
 and $\rho: \reals\rightarrow\reals$ is convex.  
 Then we have for any $i \in [1:m]$ and any $\alpha \geq 0$, 
 \begin{align}\label{eq:intermediate11}
\Big\{\sup_{a,b \in \R} \tilde{\theta}^{(a,b)}_i\geq \alpha \Big\}&\subseteq \Big\{\exists s,t: 1\leq s\leq i\leq t\leq m, z_1(s,t)\geq 0 \Big\}\\
\Big\{\inf_{a,b \in \R} \tilde{\theta}^{(a,b)}_i\leq -\alpha \Big\}&\subseteq \Big\{\exists s,t: 1\leq s\leq i\leq t\leq m, z_2(s,t)\leq 0 \Big\}\label{eq:intermediate2}.
 \end{align}
 where 
 \[
 z_1(s,t)= \begin{cases}\sum_{j=s}^t \rho'_{+}(y_j-\alpha)-2\lambda,\,\,&\text{if $s\neq 1$ and $t\neq m$}\\\sum_{j=s}^t \rho'_{+}(y_j-\alpha) ,\,\,&\text{if $s\neq 1, t=m$ or $s=1,t\neq m$}\\
\sum_{j=s}^t \rho'_{+}(y_j-\alpha) +2\lambda,\,\,&\text{if $s =1$ and $t= m$}\end{cases}\]
and
 \[
 z_2(s,t)= \begin{cases}\sum_{j=s}^t \rho'_{-}(y_j-\alpha)+2\lambda,\,\,&\text{if $s\neq 1$ and $t\neq m$}\\\sum_{j=s}^t \rho'_{-}(y_j-\alpha) ,\,\,&\text{if $s\neq 1, t=m$ or $s=1,t\neq m$}\\
\sum_{j=s}^t \rho'_{-}(y_j-\alpha) -2\lambda,\,\,&\text{if $s =1$ and $t= m$}\end{cases}\]
\end{lemma}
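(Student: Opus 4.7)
I would derive both inclusions from the KKT/subgradient conditions of the convex program \eqref{eq:optimization0.1}, combined with a carefully chosen superlevel (resp.\ sublevel) interval around the index $i$. Fix $(a,b)$ and the minimizer $\tilde\theta=\tilde\theta^{(a,b)}$. Writing out the subdifferential of $G^{(a,b)}$ coordinate by coordinate, I would produce subgradients $g_j\in[-\rho'_+(y_j-\tilde\theta_j),\,-\rho'_-(y_j-\tilde\theta_j)]$ of the data-fidelity term and $\tau_j\in[-1,1]$ of the $j$-th TV bond (with $\tau_0\in\partial|\cdot|(\tilde\theta_1-a)$ and $\tau_m\in\partial|\cdot|(\tilde\theta_m-b)$ encoding the two boundary penalties), such that coordinate-wise optimality reduces to the clean telescoping identity
\[
g_j \;=\; \lambda(\tau_j-\tau_{j-1}),\qquad 1\le j\le m.
\]

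To prove \eqref{eq:intermediate11}, first assume $\tilde\theta_i\ge\alpha$ and let $[s,t]$ be the maximal interval containing $i$ on which $\tilde\theta_j\ge\alpha$. Maximality pins down the boundary signs: if $s>1$ then $\tilde\theta_{s-1}<\alpha\le\tilde\theta_s$ forces $\tau_{s-1}=+1$; if $t<m$ then $\tilde\theta_{t+1}<\alpha\le\tilde\theta_t$ forces $\tau_t=-1$; at a true endpoint ($s=1$ or $t=m$) I retain only the weaker bound $\tau_0,\tau_m\in[-1,1]$. Summing the KKT identity over $j\in[s,t]$ telescopes to $\sum_{j=s}^t g_j=\lambda(\tau_t-\tau_{s-1})$, which evaluates to $-2\lambda$ in the interior case, lies in $[-2\lambda,0]$ when exactly one of $s=1,\,t=m$ holds, and lies in $[-2\lambda,2\lambda]$ when both hold. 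Combined with the pointwise inequality $g_j\ge-\rho'_+(y_j-\tilde\theta_j)\ge-\rho'_+(y_j-\alpha)$, which uses $\tilde\theta_j\ge\alpha$ together with the non-decrease of $\rho'_+$, a rearrangement yields exactly $z_1(s,t)\ge 0$ in each of the three cases. To pass from a fixed $(a,b)$ to the supremum: for each $\varepsilon>0$ pick $(a,b)$ with $\tilde\theta^{(a,b)}_i\ge\alpha-\varepsilon$; the argument above produces a pair $(s,t)$ (drawn from a finite set) with $z_1(s,t)\ge 0$ at level $\alpha-\varepsilon$; extracting a constant subsequence and invoking right-continuity of $\rho'_+$ delivers $z_1(s,t)\ge 0$ at level $\alpha$.

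The inclusion \eqref{eq:intermediate2} is obtained by the symmetric argument on the maximal interval where $\tilde\theta_j$ lies below the relevant threshold, using the upper bound $g_j\le-\rho'_-(y_j-\tilde\theta_j)$ and the non-decrease of $\rho'_-$. The boundary signs $\tau_{s-1},\tau_t$ flip relative to the first case, so the interior telescoping now gives $\sum g_j=+2\lambda$, and the rearrangement produces $z_2(s,t)\le 0$.

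The main obstacle I anticipate is the careful case analysis at the endpoints of $[s,t]$: the three cases $(s>1,\,t<m)$, $(s=1$ xor $t=m)$, and $(s=1,\,t=m)$ correspond to the three different definitions of $z_1$ and $z_2$, and the correct $\pm 2\lambda$ shift in each case must be derived by tracking the values of $\tau_{s-1}$ and $\tau_t$. Apart from this bookkeeping, the argument is a routine application of convex subdifferential calculus together with the monotonicity of $\rho'_\pm$.
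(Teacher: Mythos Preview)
Your proposal is correct and follows essentially the same approach as the paper. The paper phrases the first-order optimality condition as a directional-derivative argument (perturbing $\tilde\theta$ downward by $\eta$ on the chosen interval and letting $\eta\to 0^+$), whereas you write out the KKT system $g_j=\lambda(\tau_j-\tau_{j-1})$ and telescope; these are two equivalent ways of expressing the same subgradient inequality. The only other difference is that the paper takes $[s,t]$ to be the maximal interval on which $\tilde\theta_j\ge\tilde\theta_i$ (rather than $\ge\alpha$), but either choice yields the strict inequality at the boundary needed to pin down $\tau_{s-1},\tau_t$ and the inequality $\tilde\theta_j\ge\alpha$ needed for the monotonicity step. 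Your $\varepsilon$-limiting argument to pass from fixed $(a,b)$ to the supremum is a nice addition that the paper glosses over; it is harmless here since the right-hand event is independent of $(a,b)$ and the lemma is ultimately applied with a specific $(a,b)$.
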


\begin{remark}
	A notable aspect of the above deterministic lemma is that the events or sets in the right hand side of~\eqref{eq:intermediate1} and~\eqref{eq:intermediate2} do not depend on $a,b.$ This fact plays a key role in our overall analysis. 
	\end{remark}

 \begin{proof}[Proof of Lemma~\ref{lemma:intermediate}]

Let us fix any $a,b \in \R$. We will drop the superscript $(a,b)$ notation within this proof to reduce notational clutter. 

Fix $i \in [1:m].$ First, we define $s,t$ as follows: $s\leq t$ are integers, and $[s,t]$ is the largest interval containing $i$ such that $\tilde{\theta}_j$ is greater or equal to $\tilde{\theta}_i$ for all $j\in [s,t]$. Note that this interval is always non empty as it always contains $i.$ This definition also implies that $\tilde{\theta}_{s-1} < \tilde{\theta}_i$ if $s \neq 1$, 
and $\tilde{\theta}_{t+1}<\tilde{\theta}_i$ if $t \neq m$. 

Secondly, for any $\eta > 0$ let us define a $\eta$ perturbation of $\tilde{\theta}$, denoted by $\tilde{\theta}'$ and defined as
\[
\tilde{\theta}'_j=\begin{cases}\tilde{\theta}_j-\eta,\,\,&\text{if $j\in [s,t]$,}\\\tilde{\theta}_j,\,\,&\text{if $j\not\in [s,t]$}.\end{cases}
\]

For all $j \in [s:t]$, 
\begin{equation}\label{eq:property_partial_rho2}
\lim_{\eta\rightarrow 0^+}\frac{\rho(y_j-\tilde{\theta}_j')-\rho(y_j-\tilde{\theta}_j)}{\eta}=\rho'_{+}(y_j-\tilde{\theta}_j).
\end{equation}

Let us denote the $m + 2$ dimensional vector $(a,\tilde{\theta}_1,\dots,\tilde{\theta}_m,b)$ by $[a:\tilde{\theta}:b].$ Similarly, we use the notation $[a:\tilde{\theta}^{'}:b]$ to denote the vector where we concatenate $a$ and $b$ to both ends of $\tilde{\theta}^{'}.$

Since $\tilde{\theta}_{s-1} < \tilde{\theta}_i$ if $s \neq 1$
and $\tilde{\theta}_{t+1}<\tilde{\theta}_i$ if $t \neq m$, for a small enough $\eta > 0$ one can check that the following holds:
\begin{equation*}
TV([a\!:\!\tilde{\theta}^{'}\!:\!b]) - TV([a\!:\!\tilde{\theta}\!:\!b]) \!= \!\begin{cases}
-2 \eta \lambda,&\text{if $s\neq 1$ and $t\neq m$}\\
-2 \eta \lambda + 2 \eta \lambda I(\tilde{\theta}_1<a),&\text{if $s=1,t\neq m$}\\
-2 \eta \lambda + 2\eta \lambda I(\tilde{\theta}_m<b),&\text{if $s\neq 1, t=m$ }\\
-2 \eta \lambda + 2\eta \lambda I(\tilde{\theta}_1<a)+ 2 \eta \lambda I(\tilde{\theta}_m<b),&\text{if $s =1$ and $t= m$}.\end{cases}
\end{equation*}


Therefore, using the last two displays and the fact that $\tilde{\theta}$ is the optimizer, we can conclude that 
\begin{align}\label{eq:subgradient}
&0 \leq \lim_{\eta\rightarrow 0+}\frac{G(\{\tilde{\theta}'_j\}_{j=1}^m)-G(\{\tilde{\theta}_j\}_{j=1}^m)}{\eta} \\ =&   \begin{cases}\sum_{j=s}^t\rho'_{+}(y_j -\tilde{\theta}_j ) -2\lambda,\,\,&\text{if $s\neq 1$ and $t\neq m$}\\\sum_{j=s}^t\rho'_{+}(y_j -\tilde{\theta}_j )-2\lambda + 2\lambda I(\tilde{\theta}_1<a),\,\,&\text{if $s=1,t\neq m$}\\\sum_{j=s}^t\rho'_{+}(y_j -\tilde{\theta}_j )-2\lambda + 2\lambda I(\tilde{\theta}_m<b),\,\,&\text{if $s\neq 1, t=m$ }\\
\sum_{j=s}^t\rho'_{+}(y_j -\tilde{\theta}_j )-2\lambda + 2\lambda I(\tilde{\theta}_1<a)+ 2\lambda I(\tilde{\theta}_m<b),\,\,&\text{if $s =1$ and $t= m$}.\end{cases}\nonumber
\end{align}


Finally, observe that when $\tilde{\theta}_i\geq \alpha$, the convexity of $\rho$ and $\tilde{\theta}_j\geq \tilde{\theta}_i\geq \alpha$ for $j \in [s:t]$ imply that $\rho'_{+}(y_i-\tilde{\theta}_j)\leq  \rho'_{+}(y_i-\alpha)$. The statement in~\eqref{eq:intermediate1} now follows. The proof of \eqref{eq:intermediate2} is similar.
 \end{proof}

We will now state the following lemma, which is a restatement of the finite version of law of iterated logarithm from \cite[Lemma 1]{pmlr-v35-jamieson14} (by setting $\epsilon=1$ in their notation):

\begin{lemma}\label{lem:lil}[Anytime Finite Sample Version of Law of Iterated Logarithm]

Let $X_1,X_2,\cdots$ be a sequence of i.i.d. sub-Gaussian random variables with mean $\mu$ and scale parameter $\sigma$.  For any $\delta\in (0,\log 2/e)$ we have
\begin{align*}
&\Pr\left(\bigcap_{t = 1}^{\infty} \left\{-4\sigma\sqrt{t\Big(\log\log(2t)+\log \frac{1}{\delta}\Big)}\leq \sum_{s=1}^tX_s-t\mu\leq  4\sigma\sqrt{t\Big(\log\log(2t)+\log \frac{1}{\delta}\Big)}\right\}\right) \\\geq& 1  \:\:\:-\:\: 6 \frac{\delta^2}{(\log 2)^2}.
\end{align*}
\end{lemma}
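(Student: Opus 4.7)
Since the statement is quoted from Lemma~1 of~\cite{pmlr-v35-jamieson14}, my plan is to reconstruct it via the canonical peeling argument underlying anytime finite-sample laws of the iterated logarithm. Write $S_t = \sum_{s=1}^t (X_s - \mu)$. Because $X_s - \mu$ is sub-Gaussian with parameter $\sigma$, for every $\lambda \in \R$ the exponential process $M_\lambda(t) = \exp\bigl(\lambda S_t - \lambda^2 \sigma^2 t/2\bigr)$ is a nonnegative supermartingale with $M_\lambda(0)=1$. Ville's maximal inequality therefore gives $\Pr\bigl(\sup_{t \geq 1} M_\lambda(t) \geq 1/\eta\bigr) \leq \eta$ for every $\eta > 0$, so that with probability at least $1-\eta$ one has $\lambda S_t \leq \lambda^2 \sigma^2 t/2 + \log(1/\eta)$ simultaneously in all $t \geq 1$. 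A single global $\lambda$ cannot produce the claimed LIL rate because the optimal scale depends on $t$; the peeling trick is what fixes this.

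I would then partition the positive integers into geometric blocks $B_k = [2^k, 2^{k+1})$ for $k = 0,1,2,\ldots$, and on each block apply Ville's inequality with a scale $\lambda_k$ tuned to the representative time $2^{k+1}$ and a failure budget $\eta_k$. A short optimization over $\lambda_k$ yields, with probability at least $1 - 2\eta_k$ (the factor $2$ coming from combining the one-sided bounds at $\pm\lambda_k$), the two-sided block-wise control $|S_t| \leq 2\sigma \sqrt{t \log(1/\eta_k)}$ for every $t \in B_k$. Because $t \in B_k$ forces $\log\log(2t)$ to be comparable to $\log(k+1)$ up to an additive $\log\log 2$, choosing $\eta_k$ proportional to $\delta^2/(k+1)^2$ makes $\log(1/\eta_k)$ equal, up to a universal constant, to $\log\log(2t) + \log(1/\delta)$ throughout the block.

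A union bound over $k \geq 0$ controls the total failure probability by $\sum_k 2\eta_k$. The precise prefactor $6/(\log 2)^2$ in the statement arises from $\sum_{k \geq 0} 1/(k+1)^2 = \pi^2/6$ combined with the $\log 2$ appearing in the conversion between $\log\log(2t)$ and $\log(k+1)$. The leading constant $4$ is the product of the factor $2$ of the symmetric Ville bound and the factor $2$ picked up by the estimate $\sqrt{2 \cdot 2^{k+1}} \leq 2\sqrt{t}$ when $t \geq 2^k$, together with the slack used to absorb the additive $\log\log 2$ into $\log\log(2t) + \log(1/\delta)$. The main obstacle is not the conceptual architecture but this constants bookkeeping, so in practice I would articulate the peeling sketch above and then cite~\cite{pmlr-v35-jamieson14} to certify the exact numerical form.
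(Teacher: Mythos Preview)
Your proposal is correct in spirit, but note that the paper itself does not prove this lemma at all: it is stated purely as a citation, introduced with ``which is a restatement of the finite version of law of iterated logarithm from \cite[Lemma 1]{pmlr-v35-jamieson14} (by setting $\epsilon=1$ in their notation)'', and no further argument is given. So there is no paper proof to compare against; your peeling sketch goes beyond what the authors do and is essentially a reconstruction of the argument in the cited reference. Your final sentence---reducing to the citation for the exact constants---already matches the paper's treatment exactly.
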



\medskip


\begin{proof}[Proof of Theorem~\ref{thm:general}]

The true signal $\theta^*$ has $K$ constant pieces which are intervals of the form  $[n_1,n_2-1]$, $[n_2, n_3-1]$, $\cdots$, $[n_K,n]$). In this proof we will analyze each piece separately. Let us focus on any one of the $K$ pieces; say the $k$th piece or in other words, the interval $I_k = [n_k, n_{k + 1} - 1].$ Without loss of generality, we can assume that $\theta^*_{I_k} = \textbf{0} \in \reals^{m_{k}}.$ This is because adding a constant to the data vector $y$ changes the solution $\hat{\theta}$ by the same constant.

Now we make the observation that $\hat{\theta}_{I_k}$ can be written as a solution of the following optimization problem:
\[
\{\hat{\theta}_i\}_{i \in I_k}=\argmin_{\{{\theta}_i\}_{i \in I_k}} \sum_{i \in I_k} \rho(y_i-\theta_i)+\lambda \left(|\theta_{n_k}-\hat{\theta}_{n_k-1}|+|\theta_{n_{k+1}}-\hat{\theta}_{n_{k+1}+1}|+ TV(\{\theta_i\}_{i=n_k}^{n_{k+1}-1})\right)
\] 

Here we are implicitly assuming that this piece is not the first or the last one. However, as the proof will reveal, the same conclusion can also be reached for the first and the last piece.

Now note that the above display is the same optimization problem as in \eqref{eq:optimization0.1} with $a=\hat{\theta}_{n_k-1}$ and $b=\hat{\theta}_{n_{k+1}+1}.$ Since Lemma~\ref{lemma:intermediate} gives a bound on the entries of the solution of such an optimization problem uniformly over $a,b$ we will now use Lemma~\ref{lemma:intermediate}. Infact, Lemma~\ref{lemma:intermediate} along with Lemma~\ref{lem:lil} actually implies the following result which we state as a proposition.

\begin{proposition}\label{prop:inter}
	For any vector $y \in \R^m$ and for any two real numbers $a,b$ consider the solution to the $m$ dimensional optimization problem
	\begin{equation}\label{eq:optimization1}
	\hat{\theta}^{(a,b)} =\argmin_{\theta \in \R^m} G^{(a,b)}(\theta),
	\end{equation}
	where \[
	G^{(a,b)}(\theta)=\sum_{i=1}^m\rho(y_i-\theta_i)+\lambda \left(|\theta_1-a|+|\theta_m-b|+ TV(\theta)\right)\]
	and $\rho: \reals\rightarrow\reals$ is convex.

	Suppose the input data vector $y = \theta^* + \epsilon$ is a random vector where the true signal vector $\theta^* = \textbf{0} \in \R^m$ and the noise vector $\epsilon$ satisfies \textbf{Assumption A}.

	For any $i \in [1:m]$ and any $\delta > 0$, define the number 
	$$B_{\delta,i} = 4\sigma \left(\sqrt{\frac{\log\log 2\max(3,\kappa_i)}{{\max(3,\kappa_i)}}}+\sqrt{\frac{\log \frac{1}{\delta}}{\kappa_i}}\right) + 4 \sigma^2\frac{\log\log(2m) + \log \frac{1}{\delta}}{\lambda} + \frac{2\sqrt{m\sigma^2\log \frac{1}{\delta}}+2\lambda}{m},$$where $\kappa_i=\min(i,m-i+1)$. 	
	
	For any $\alpha > 0$, if ${L^{+}(\alpha) }\leq - B_{\delta,i}$ then 
    \begin{equation}\label{eq:prop:iter1}
     \Pr(\sup_{a,b \in \R} \hat{\theta}^{(a,b)}_i > \alpha) \leq \Big(1 + \frac{24}{(\log 2)^2}\Big) \delta^2.\end{equation}
    Similarly, if ${L^{-}(\alpha)} \geq B_{\delta,i}$ then 
    \begin{equation}\label{eq:prop:iter2}
     \Pr(\inf_{a,b \in \R} \hat{\theta}^{(a,b)}_i < -\alpha) \leq \Big(1 + \frac{24}{(\log 2)^2}\Big) \delta^2.\end{equation}

\end{proposition}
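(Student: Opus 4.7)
The plan is to combine Lemma~\ref{lemma:intermediate} with the anytime law of iterated logarithm (Lemma~\ref{lem:lil}) and a one-shot sub-Gaussian Chernoff bound, organizing the analysis around the three branches in the definition of $z_1(s,t)$. Since $\theta^* = \mathbf{0}$ we have $y_j = \epsilon_j$, and by Assumption A the variables $X_j := \rho'_{+}(\epsilon_j - \alpha)$ are i.i.d.\ sub-Gaussian with parameter $\sigma$ and mean $\mu := L^{+}(\alpha) \le -B_{\delta,i} < 0$. Lemma~\ref{lemma:intermediate} reduces the probabilistic event $\{\sup_{a,b} \hat{\theta}^{(a,b)}_i > \alpha\}$ to the deterministic event $\{\exists\, 1 \le s \le i \le t \le m : z_1(s,t) \ge 0\}$, so it suffices to control the probability of this latter event.

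I would apply Lemma~\ref{lem:lil} twice --- once to the reversed sequence $X_i, X_{i-1}, \ldots, X_1$ and once to the forward sequence $X_{i+1}, X_{i+2}, \ldots, X_m$ --- together with a standard sub-Gaussian tail bound on the full sum $\sum_{j=1}^{m}(X_j - \mu)$. Writing $U(k) := 4\sigma\sqrt{k(\log\log(2k) + \log(1/\delta))}$ and $n := t-s+1$, after a union bound we may assume that uniformly over $1 \le s \le i \le t \le m$,
\begin{equation*}
\sum_{j=s}^{t}(X_j - \mu) \;\le\; U(i-s+1) + U(t-i),
\end{equation*}
and in addition $\sum_{j=1}^{m}(X_j - \mu) \le 2\sqrt{m\sigma^2 \log(1/\delta)}$. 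The combined failure probabilities collect into the constant $(1 + 24/(\log 2)^2)\delta^2$ in the final bound.

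The core of the argument is then an algebraic case analysis that matches each branch of $z_1(s,t)$ to one of the three additive pieces of $B_{\delta,i}$. In the interior case $s \ne 1,\, t \ne m$, the constraint $z_1(s,t)\ge 0$ reads $\sum_{j=s}^{t}(X_j - \mu) \ge 2\lambda - n\mu \ge 2\lambda + n B_{\delta,i}$; combining $U(p) + U(q) \le 4\sigma\sqrt{2n(\log\log(2m) + \log(1/\delta))}$ with the AM--GM inequality applied to $2\lambda$ and $n \cdot 4\sigma^2(\log\log(2m) + \log(1/\delta))/\lambda$ shows that the second term of $B_{\delta,i}$ alone rules this out. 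In the semi-boundary case (exactly one of $s=1$, $t=m$) the requirement becomes $\sum_{j=s}^{t}(X_j - \mu) \ge n B_{\delta,i}$ with $n \ge \kappa_i$, and the first term of $B_{\delta,i}$ handles it after using the monotonicity of $k \mapsto \log\log(2k)/k$ for $k \ge 3$ to replace $\log\log(2n)$ by $\log\log(2\max(3,\kappa_i))$. The full-boundary case $s=1,\, t=m$ requires $\sum_{j=1}^{m}(X_j - \mu) \ge -2\lambda + m B_{\delta,i} \ge 2\sqrt{m\sigma^2\log(1/\delta)}$ by the definition of the third term of $B_{\delta,i}$, and is excluded by the Chernoff bound.

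The companion bound \eqref{eq:prop:iter2} follows by a symmetric argument starting from \eqref{eq:intermediate2} with $\rho'_{+}, L^{+}, z_1$ replaced by $\rho'_{-}, L^{-}, z_2$ and all relevant inequalities reversed. The main obstacle I anticipate is the semi-boundary case: one has to carefully exploit $n \ge \kappa_i$ together with $\sqrt{a} + \sqrt{b} \le \sqrt{2(a+b)}$ to reconcile $U(i-s+1) + U(t-i)$ with the specific form $\sqrt{\log\log(2\max(3,\kappa_i))/\max(3,\kappa_i)} + \sqrt{\log(1/\delta)/\kappa_i}$ appearing in $B_{\delta,i}$, since the $\log\log$ factor in the LIL bound depends on the variable interval length $n$ whereas the corresponding term in $B_{\delta,i}$ depends only on $\kappa_i$.
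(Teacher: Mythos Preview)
Your treatment of the interior case ($s\neq 1$, $t\neq m$) via AM--GM and of the full-boundary case ($s=1$, $t=m$) via Hoeffding matches the paper's argument and is correct. The gap is exactly where you flag it: the semi-boundary case cannot be closed with only two LIL applications anchored at $i$.

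Concretely, take $\kappa_i = i = 100$, $g:=\log(1/\delta)=100$, $\sigma=1$, and let $m$ be very large with $\lambda$ of order $\sqrt{m}$ (so that the second and third summands of $B_{\delta,i}$ are negligible). For $s=1$ and $t=125$ (so $q=t-i=25$) your split gives
\[
\frac{U(100)+U(25)}{125}\approx 4.84,
\qquad\text{whereas}\qquad
B_{\delta,i}\approx 4\Bigl(\sqrt{\tfrac{\log\log 200}{100}}+\sqrt{\tfrac{100}{100}}\Bigr)\approx 4.52,
\]
so $U(i)+U(t-i)>(i+q)B_{\delta,i}$ and the first term does \emph{not} handle the semi-boundary case. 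The loss comes from subadditivity of $U$: splitting $\sum_{j=1}^{t}$ at $i$ replaces the single term $U(t)$ by $U(i)+U(t-i)\ge U(t)$, and this excess can push $(U(i)+U(q))/(i+q)$ strictly above $U(t)/t$ and above $B_{\delta,i}$. No amount of juggling with $\sqrt{a}+\sqrt{b}\le\sqrt{2(a+b)}$ repairs this, because the defect is in the numerator, not in how the $\log\log$ is bounded.

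The paper avoids this by spending two \emph{additional} LIL applications: one on the forward sequence $X_1,X_2,\ldots$ (controlling $z(1,t)$ directly by $U(t)$ for all $t\ge i$) and one on the reversed sequence $X_m,X_{m-1},\ldots$ (controlling $z(s,m)$ directly by $U(m-s+1)$ for all $s\le i$). With a single $U(n)$ in the numerator and $n\ge\kappa_i$, the bound $U(n)/n\le 4\sigma\bigl(\sqrt{\log\log(2\max(3,\kappa_i))/\max(3,\kappa_i)}+\sqrt{g/\kappa_i}\bigr)$ follows immediately from the monotonicity of $x\mapsto\log\log(2x)/x$ for $x\ge 3$. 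This is also why the final constant is $1+24/(\log 2)^2$ rather than the $1+12/(\log 2)^2$ your two-LIL budget would yield: four LIL invocations (two anchored at $i$ for the interior case, two anchored at the endpoints for the semi-boundary cases) plus one Hoeffding are exactly what produce the stated probability.
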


We now claim that the above proposition can be directly applied to $\hat{\theta}_{I_k}$ to obtain the desired bounds in Theorem~\ref{thm:general} by plugging in $m = m_{k(i)}$ and $\kappa_i = d_{i}$ in the expression for $B_{\delta_i}.$

For example, to show that the first assertion in Theorem~\ref{thm:general} follows from \eqref{eq:prop:iter1}, we need to ensure that there exists an $\alpha > 0$ such that $L^{+}(\alpha) \leq - B_{\delta,i}.$ This is because if such an $\alpha > 0$ exists then for this $\alpha$ we have the event inclusion relation (since $L^{+}$ is non increasing),
$$\{L^{+}(\sup_{a,b \in \R} \hat{\theta}^{(a,b)}_i) < L^{+}(\alpha) \leq - B_{\delta,i}\} \subset \{\sup_{a,b \in \R} \hat{\theta}^{(a,b)}_i > \alpha\}.$$
The above display along with~\eqref{eq:prop:iter1} implies that 
$$\Pr\Big(L^{+}(\hat{\theta}_i - \theta^*_i) \leq -B_{i,\delta} \Big)\leq\Big(1 + \frac{24}{(\log 2)^2}\Big)\delta^2$$
which is the same as the first assertion in Theorem~\ref{thm:general}.

Now note that the existence of an $\alpha > 0$ such that $L^{+}(\alpha) \leq - B_{\delta,i}.$ is automatically guaranteed if $-B_{\delta,i} \geq \inf_{x \in \R} L^{+}(x).$ In the complementary case when $-B_{\delta,i} < \inf_{x \in \R} L^{+}(x)$ then $\Pr\Big(L^{+}(\hat{\theta}_i - \theta^*_i) \leq -B_{i,\delta} \Big) = 0$ and hence the desired bound is anyway true. The second assertion can be proved similarly.

\end{proof}


	
	
Finally it remains to prove Proposition~\ref{prop:inter}.

\begin{proof}[Proof of Proposition~\ref{prop:inter}]
We will prove~\eqref{eq:prop:iter1} and~\eqref{eq:prop:iter2} can then be shown similarly. Within this proof, we will fix any arbitrary $a,b \in \R$ and show the final bound which is free from $a,b.$ We will also drop the superscript notation and simply denote $\hat{\theta}^{(a,b)}$ by $\hat{\theta}.$ 

By our assumption, $y_i = \epsilon_i.$ Within this proof, let us denote the partial sums of the random variables $\rho'_{+}(y_i-\alpha)$ within any interval $[s,t] \subset [1:m]$ by
$$z(s,t) = \sum_{i=s}^t \rho'_{+}(y_i-\alpha).$$

By the set inclusion relation in \eqref{eq:intermediate11} in  Lemma~\ref{lemma:intermediate}, we can now write for any $i \in [1:m]$ and any $\alpha > 0$,
\begin{equation}\label{eq:p1234}
\Pr\Big(\hat{\theta}_i > \alpha \Big)\leq p_1+p_2+p_3+p_4,
 \end{equation}
 where $p_1,p_2,p_3,p_4$ are probabilities of events given by 
 \begin{align*}
 p_1=&\Pr\Big(\exists s\neq 1, t\neq m: s\leq i\leq t,\,\,  z(s,t)\geq 2\lambda\Big)\\
 p_2=&\Pr\Big(\exists t\geq i: z(1,t)\geq 0\Big)\\
p_3=&\Pr\Big(\exists s\leq i: z(s,m)\geq 0\Big)\\
p_4=&\Pr\Big(z(1,m)\geq -2\lambda \Big).
\end{align*} 
We will now use Lemma~\ref{lem:lil}  to bound these probabilities. Fix $\delta > 0.$ Let us also denote $\mu = \Expect \rho'_{+}(y_1-\alpha) = L^{+}(\alpha).$


First, let us consider $p_3.$ Let us define the event 
$$E^{(1)}_{\delta} = \bigcap_{s = 1}^{i} \left\{z(s,m) \in [(m - s + 1) \mu \pm 4\sigma\sqrt{(m - s + 1) \Big(\log\log(2(m - s + 1))+\log \frac{1}{\delta}\Big)}]\right\}.$$
By Lemma~\ref{lem:lil} we have 
$$\Pr(E^{(1)}_{\delta}) \geq 1 - 6 \frac{\delta^2}{(\log 2)^2}.$$

It is not hard to check that if 
\begin{equation}\label{eq:p1}
\mu \leq -T_1, \,\,\text{where}\,\,T_1=4\sigma\max_{1\leq s\leq i}\sqrt{\frac{\log\log(2(m-s+1))+\log \frac{1}{\delta}}{m-s+1}}
\end{equation}
then the event $E^{(1)}_{\delta} \cap \{\exists s\leq i: z(s,m)\geq 0\}$ cannot happen. Therefore, if $\mu \leq -T_1$ then
$$p_3 = \Pr\Big(\exists s\leq i: z(s,m)\leq 0\Big) \leq \Pr\Big(\{\exists s\leq i: z(s,m)\leq 0\} \cap (E^{(1)}_{\delta})^c \Big)\leq 6 \frac{\delta^2}{(\log 2)^2}.$$

Similarly, we can conclude that $p_2\leq 6(\delta/\log 2)^2$ if 
\begin{equation}\label{eq:p2}
\mu \leq  -T_2, \,\,\text{where}\,\,T_2=4\sigma\max_{i\leq t\leq m}\sqrt{\frac{\log\log(2t) + \log \frac{1}{\delta}}{t}}.
\end{equation}

We will now bound $p_1.$ Define the events
\begin{equation*}
\begin{cases}
A = \{\exists t: i \leq t \leq m, z(i,t) \geq  \lambda\} \\
B =  \{\exists s: 1 \leq s < i, z(s,i - 1) \geq  \lambda\}.
\end{cases}
\end{equation*}

Now we can use a union bound argument and note that
\begin{equation*}
p_1 \leq \Pr(A) + \Pr(B).
\end{equation*}
We will now bound $\Pr(A)$ and $\Pr(B)$ can then be bounded similarly.

Let us define the event 
$$E_{\delta} = \bigcap_{t = i}^{m} \left\{z(i,t) \in [(t - i + 1) \mu \pm 4\sigma\sqrt{t\Big(\log\log(2(t - i + 1))+\log \frac{1}{\delta}\Big)}]\right\}.$$

Lemma~\ref{lem:lil} says that 
\begin{equation*}
\Pr(E_{\delta}) \geq 1 - 6 \frac{\delta^2}{(\log 2)^2}.
\end{equation*}

This implies in particular that if 
\begin{equation*}
\mu \leq -\max_{t \in [i:m]} \Big(4\sigma\sqrt{\frac{\log\log(2(t - i + 1))-\log\delta}{(t - i + 1)}} - \frac{\lambda}{(t - i + 1)}\Big)
\end{equation*}

then the event $A \cap E_{\delta}$ cannot happen.

Similarly, if \begin{equation*}
\mu \leq -\max_{s \in [1:(i - 1)]} \Big(4\sigma\sqrt{\frac{\log\log(2s)-\log\delta}{s}} - \frac{\lambda}{s}\Big)
\end{equation*}

then the event $B \cap E_{\delta}$ cannot happen.

Combining the last two displays lets us conclude that if 
$\mu \leq -T_3$, where $$T_3 = \max_{s \in [1:m]} \Big(4\sigma\sqrt{\frac{\log\log(2s)-\log\delta}{s}} - \frac{\lambda}{s}\Big)$$

then 
\begin{equation}\label{eq:p3}
p_1 \leq \Pr(A) + \Pr(B) = \Pr(A \cap E_{\delta}^c) + \Pr(B \cap E_{\delta}^c) \leq 2 \Pr(E_{\delta}^c) \leq 12 \frac{\delta^2}{(\log 2)^2}.
\end{equation}

Finally, an application of Hoeffding's inequality for sum of independent subgaussian random variables~\cite[Proposition 2.5]{wainwright2019high} implies that 
$$ p_4\leq \exp\Big(-\frac{({2\lambda}-m\mu)^2}{2m\sigma^2}\Big).$$

Equivalently, $p_4\leq \delta^2$ if 
\begin{equation}\label{eq:p4}
\mu\leq -T_4, \,\,\text{where}\,\,T_4 = \frac{2\sqrt{m\sigma^2\log \frac{1}{\delta}}+2\lambda}{m}.
\end{equation}
Combining~\eqref{eq:p1234} with the derived bounds on $p_1, p_2, p_3, p_4$ in \eqref{eq:p1}-\eqref{eq:p4}, we have 
$$ \Pr\Big(\hat{\theta}_i>\alpha \Big)\leq (\delta^2 +\frac{24}{(\log 2)^2}\delta^2)$$ if
\begin{align}\label{eq:T1234}
&-\mu \geq  \max(T_1,T_2,T_3,T_4).
\end{align}

At this point, we further bound
 \begin{equation}T_1 \!\leq\! 4\sigma \left(\!\max_{1\leq s\leq i}\!\sqrt{\frac{\log\log(2(m\!-\!s\!+\!1))}{m-s+1}}\!+\!\sqrt{\frac{\log \frac{1}{\delta}}{m-i+1}}\right) \!\!\!\leq 4\sigma \!\left(\!\sqrt{\frac{\log\log 2\max(3,\kappa_i)}{{\max(3,\kappa_i)}}}\!+\!\sqrt{\frac{\log \!\frac{1}{\delta}}{\kappa_i}}\!\right),\label{eq:T1_upperbound}\end{equation}
where $\kappa_i=\min(i,m-i+1)$.

The above inequality uses the fact that for the function $f(x)=\frac{\log\log 2x}{x}$, $f(1)\leq f(2)\leq f(3)$, and f(x) is nonincreasing for $x\geq 3$, since $f'(x)=\frac{\frac{1}{\log(2x)}-\log\log(2x)}{x^2}$ and $\frac{1}{\log(2x)}-\log\log(2x)<0$ for $x\geq 3$. 

A similar reasoning gives the same upper bound to $T_2$: 
\[T_2 \leq 4\sigma\left(\sqrt{\frac{\log\log 2\max(3,\kappa_i)}{{\max(3,\kappa_i)}}}+\sqrt{\frac{\log \frac{1}{\delta}}{\kappa_i}}\right),\]

Finally, we can bound
\[T_3\leq \max_{1 \leq s \leq m} \Big(4\sigma\sqrt{\frac{\log\log(2m) + \log \frac{1}{\delta}}{s}} - \frac{\lambda}{s}\Big)\leq 4 \sigma^2\frac{\log\log(2m) + \log \frac{1}{\delta}}{\lambda},\]
where we used the fact that 
$\max_{x \geq 1} \left(\frac{u}{\sqrt{x}} - \frac{v}{x}\right) \leq \frac{u^2}{4v}$ for any positive numbers $u,v.$

We can now define $B_{i,\delta}$ to be the sum of these bounds on $T_1,T_2,T_3$ along with $T_{4}$ itself. This finishes the proof. 
\end{proof}


\section{Quantile Fused Lasso Regression}\label{sec:quantile}
In the quantile regression set up, the data vector $y$ follows the model \eqref{eq:model} where the noise $\{\epsilon_i\}_{i=1}^n$ variables are i.i.d. sampled from a distribution $D$ with CDF $F$ whose $\tau-$th quantile is uniquely $0$. Then the piecewise constant sequence $\theta^*$ becomes a unique $\tau$ quantile sequence of the data $y.$ 
We record this formally as an assumption on the error variables.

\textbf{Assumption Q1}:
The error variables $\epsilon_i$ are i.i.d with distribution D and CDF $F$ such that 
$F(0) = \tau$ and $F$ is continuously strictly increasing at $\tau.$

\medskip

We want to study the Fused Lasso estimator defined in~\eqref{eq:optimization} with $\rho(x)$ chosen to be the $\tau$-th quantile loss function:
\begin{equation}\label{eq:qtlfn}
\rho(x)=\begin{cases}\tau |x|,\,\,\text{if $x\geq 0$}\\ (1-\tau)|x|,\,\,\text{if $x< 0$},\end{cases}.
\end{equation}

We note that for this choice of $\rho$, we have $\rho'_+(x)=\tau I(x\geq 0) -(1-\tau)I(x<0)$ and $\rho'_-(x)=\tau I(x> 0) -(1-\tau)I(x\leq0)$. Since $\rho'_+$ and $\rho'_-$ are bounded within an interval of length $1$, $\rho'_{+}(\epsilon_i-t)$ and $\rho'_{-}(\epsilon_i-t)$ are automatically sub-Gaussian random variables with parameter $1/2$ for any $t\in\reals$~\cite[Example 2.4, Exercise 2.4]{wainwright2019high}.  Therefore, \textbf{Assumption A}, required for Theorem~\ref{thm:general} to hold, is automatically satisfied here without any further assumptions on the error distribution $D.$


Before proceeding further, we characterize the loss functions $L^{+},L^{-}$ in this setup. For the CDF $F$, we denote its left limit function by $F^{-}$; 
$$F^{-}(t) = \lim_{s \uparrow t} F(s).$$

\begin{lemma}\label{lemma:lossfunction}
	Under \textbf{Assumption Q1}, for any $t > 0$ we have,
	\begin{equation*}
	L^{+}(t) = \Expect_{\epsilon \sim D} \rho'_+(\epsilon - t) = -\Pr(0 \leq \epsilon < t) = F^{-}(0) - F^{-}(t).
	\end{equation*}
	Similarly, for any $t < 0$ we have,
	\begin{equation*}
	L^{-}(t) = \Expect_{\epsilon \sim D} \rho'_-(\epsilon - t) = -\Pr(t < \epsilon \leq 0) = F(t) - F(0).
	\end{equation*}
	\end{lemma}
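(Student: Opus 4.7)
The plan is to carry out a direct computation that unpacks $\rho'_+$ and $\rho'_-$ according to the sign of their argument and then expresses the resulting probabilities in terms of $F$ and its left limit $F^-$. Since $\rho$ is the check function in \eqref{eq:qtlfn}, the paper has already recorded that $\rho'_+(x)=\tau\, I(x\geq 0)-(1-\tau)\, I(x<0)$ and $\rho'_-(x)=\tau\, I(x>0)-(1-\tau)\, I(x\leq 0)$; these two formulas are all the structural input I need.

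First I would handle $L^+$. Substituting $x = \epsilon - t$ in the piecewise formula for $\rho'_+$ and taking expectation gives
\begin{equation*}
L^+(t) = \tau\,\Pr(\epsilon\geq t) - (1-\tau)\,\Pr(\epsilon< t) = \tau\,(1-F^{-}(t)) - (1-\tau)\,F^{-}(t) = \tau - F^{-}(t).
\end{equation*}
Under \textbf{Assumption Q1}, $F$ is continuous (in fact strictly increasing) at $0$ and $F(0)=\tau$, so $F^-(0)=F(0)=\tau$. Hence $L^+(t)=F^{-}(0)-F^{-}(t)$. For $t>0$ I then rewrite $F^-(t)-F^-(0)=\Pr(\epsilon<t)-\Pr(\epsilon<0)=\Pr(0\leq\epsilon<t)$, which yields the two remaining equalities in the $L^+$ statement.

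Second, for $L^-$, the same substitution with the slightly different indicator conventions yields
\begin{equation*}
L^-(t) = \tau\,\Pr(\epsilon>t) - (1-\tau)\,\Pr(\epsilon\leq t) = \tau\,(1-F(t)) - (1-\tau)\,F(t) = \tau - F(t),
\end{equation*}
and again using $F(0)=\tau$ this becomes $F(0)-F(t)$. For $t<0$, $F(0)-F(t)=\Pr(t<\epsilon\leq 0)$, which rewrites the expectation in the two forms claimed (up to sign conventions, which are pinned down by the identity $F(t)-F(0)=-\Pr(t<\epsilon\leq 0)$).

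There is essentially no obstacle here beyond bookkeeping: the one subtle point is why the value $\tau$ appearing in the definition of $\rho$ can be swapped with $F^-(0)$ (for $L^+$) and with $F(0)$ (for $L^-$) without losing the equalities. That swap is exactly what \textbf{Assumption Q1} provides, since continuity of $F$ at $0$ collapses the distinction between $F^-(0)$ and $F(0)$ and both equal $\tau$. Everything else is algebraic simplification of a two-case expectation, so the proof is short.
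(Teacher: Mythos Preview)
Your proof is correct and follows the same direct-computation approach as the paper; the only cosmetic difference is that the paper first verifies $\Expect_{\epsilon\sim D}\rho'_+(\epsilon)=0$ and then subtracts, whereas you compute $L^+(t)=\tau-F^-(t)$ in one line and then identify $\tau=F^-(0)$. Your hedging about ``sign conventions'' at the end is in fact justified: the lemma as stated carries a sign typo in the $L^-$ formula (your computed value $L^-(t)=F(0)-F(t)=\Pr(t<\epsilon\leq 0)$ is the correct one, as confirmed by how the result is actually applied in Theorem~\ref{thm:quantile}).
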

\begin{proof}[Proof of Lemma~\ref{lemma:lossfunction}] By assumption \textbf{Q1},  $0$ is the unique $\tau$-th quantile and we have \[\Expect_{\epsilon\sim D} \rho'_{+}(\epsilon)=\tau\Pr(\epsilon\geq 0)-(1-\tau)\Pr(\epsilon<0)=0.\] For $t>0$, 
\begin{align*}
&\Expect_{\epsilon\sim D} \rho'_{+}(\epsilon-t)=
\Expect_{\epsilon\sim D} \rho'_{+}(\epsilon-t)-\Expect_{\epsilon\sim D} \rho'_{+}(\epsilon)\\
=&\Big(\tau\Pr(\epsilon\geq t)-(1-\tau)\Pr(\epsilon<t)\Big)-\Big(\tau\Pr(\epsilon\geq 0)-(1-\tau)\Pr(\epsilon<0)\Big)\\
=&-\tau\Pr(0 \leq \epsilon < t)-(1-\tau)\Pr(0 \leq \epsilon < t)=-\Pr(0 \leq \epsilon < t).
\end{align*}
The proof for the case $t<0$ is similar. 
\end{proof}

A direct application of Theorem~\ref{thm:general} gives the following pointwise bound for the quantile regression version of Fused Lasso.

\begin{theorem}[Assumptionless Bound for Quantile Regression]\label{thm:quantile}
	
	Fix $0 < \tau < 1.$ Let $\hat{\theta}$ denote the fused lasso estimator defined in~\eqref{eq:optimization} when $y = \theta^* + \epsilon$ is the input data and $\rho$ is the function given in~\eqref{eq:qtlfn}. Suppose \textbf{Assumption Q1} holds. Fix any $i \in [1:n].$  Then the following is true for any $\delta\in (0,\log 2/e)$:
	\begin{enumerate}
		\item
		$\Pr\Big((F^{-}(\hat{\theta}_i - \theta^*_i) - F^{-}(0) \geq B_{i,\delta}  \Big)\leq \Big(1 + \frac{24}{(\log 2)^2}\Big) \delta^2$,
		\item
		$\Pr\Big(F(0) - F(\hat{\theta}_i - \theta^*_i)) \geq B_{i,\delta} \Big)\leq \Big(1 + \frac{24}{(\log 2)^2}\Big) \delta^2$,
	\end{enumerate}	
	where the number $B_{i,\delta}^{\text{quantile}} > 0$ is defined as 
	\begin{equation*}
		B_{i,\delta}^{\text{quantile}} \!= \!2\left(\!\sqrt{\frac{\log\log 2\max(3,d_i)}{{\max(3,d_i)}}}\!+\!\sqrt{\frac{\log \frac{1}{\delta}}{d_i}}\right) +  \frac{\log\log(2m_{k(i)}) + \log \frac{1}{\delta}}{\lambda} + \frac{\sqrt{m_{k(i)}\log \frac{1}{\delta}}\!+\!2\lambda}{m_{k(i)}}.
	\end{equation*}
\end{theorem}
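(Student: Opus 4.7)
The plan is to obtain Theorem~\ref{thm:quantile} as an immediate specialization of Theorem~\ref{thm:general} to the quantile check loss, together with the identification of $L^{+}$ and $L^{-}$ supplied by Lemma~\ref{lemma:lossfunction}. In particular, no new probabilistic or optimization argument is needed; the work is purely bookkeeping: verifying the sub-Gaussianity hypothesis, matching constants, and translating the generic loss statements into CDF statements.

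First I would justify that \textbf{Assumption A} is automatically satisfied for the quantile check loss $\rho$ in~\eqref{eq:qtlfn}, with sub-Gaussian parameter $\sigma = 1/2$. Indeed, $\rho'_{+}(x) = \tau \mathbf{1}(x \geq 0) - (1-\tau)\mathbf{1}(x<0)$ and $\rho'_{-}(x) = \tau \mathbf{1}(x > 0) - (1-\tau)\mathbf{1}(x \leq 0)$ both take values in the two-point set $\{-(1-\tau),\tau\}$, whose diameter is $1$. Hoeffding's lemma for bounded random variables (cf.\ the citation to Wainwright already invoked in the paper) therefore gives sub-Gaussianity with parameter $1/2$ for $\rho'_{\pm}(\epsilon_i - t)$, uniformly in $t \in \R$ and regardless of the distribution of $\epsilon_i$. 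This is what makes the resulting theorem ``assumptionless'' beyond \textbf{Assumption Q1}.

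Next I would plug $\sigma = 1/2$ into the definition of $B_{i,\delta}$ in~\eqref{eq:boundterm}. The substitutions $4\sigma = 2$, $4\sigma^{2} = 1$, and $2\sqrt{m_{k(i)}\sigma^{2}\log(1/\delta)} = \sqrt{m_{k(i)}\log(1/\delta)}$ reduce the general $B_{i,\delta}$ to precisely the $B_{i,\delta}^{\text{quantile}}$ stated in the theorem. Applying the two conclusions of Theorem~\ref{thm:general} with this $\sigma$ then yields, for each $i \in [1:n]$ and each $\delta \in (0,\log 2/e)$, the two probability bounds
\[
\Pr\bigl(L^{+}(\hat{\theta}_i - \theta^*_i) \leq -B_{i,\delta}^{\text{quantile}}\bigr) \leq C \delta^{2}, \qquad \Pr\bigl(L^{-}(\hat{\theta}_i - \theta^*_i) \geq B_{i,\delta}^{\text{quantile}}\bigr) \leq C \delta^{2},
\]
with $C = 1 + 24/(\log 2)^{2}$.

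Finally I would translate $L^{+}$ and $L^{-}$ into CDF form using Lemma~\ref{lemma:lossfunction}. The lemma explicitly identifies $L^{+}(t) = F^{-}(0) - F^{-}(t)$ for $t > 0$ and $L^{-}(t) = F(t) - F(0)$ for $t < 0$; an almost identical one-line computation (using the same trick of subtracting $\Expect \rho'_{\pm}(\epsilon) = 0$) extends these identities to all real $t$, so that $L^{+}(t) = F^{-}(0) - F^{-}(t)$ and $L^{-}(t) = F(t) - F(0)$ hold on $\R$. Substituting these identities into the two displays above, and rearranging signs, delivers precisely the two assertions of Theorem~\ref{thm:quantile}. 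The only mild subtlety I anticipate is making sure this extension to all $t$ is cleanly written (noting that assumption \textbf{Q1} forces $F^{-}(0) = F(0) = \tau$, so that $\Expect \rho'_{+}(\epsilon) = 0$ rigorously), but this is a short calculation rather than a real obstacle.
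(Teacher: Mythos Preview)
Your proposal is correct and follows essentially the same approach as the paper: the paper's proof is literally the one line ``The result follows from applying Theorem~\ref{thm:general} with $\sigma=1/2$,'' relying on the sub-Gaussianity discussion and Lemma~\ref{lemma:lossfunction} that precede the theorem statement. Your write-up simply makes explicit the bookkeeping (the constant substitutions and the extension of the $L^{\pm}$ formulas to all $t\in\R$) that the paper leaves implicit.
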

	\begin{proof}
The result follows from applying Theorem~\ref{thm:general} with $\sigma=1/2$.
\end{proof}

\begin{remark}
	As discussed in Section~\ref{sec:discuss1}, the above result appears to be new for Quantile Fused Lasso. The reason we state such a result in terms of the CDF of the error distribution is that it allows us to present a clean result which holds for all entries $1 \in [1:n]$ with an explicit bound involving the tuning parameter $\lambda$ and the signal parameters under minimal assumptions.
	\end{remark}

\begin{remark}
	We stress on the fact that only \textbf{Assumption Q1} is needed on the distribution of the error variables for the above theorem to hold. This assumption ensures that $0$ is a unique $\tau$ th quantile of the errors and consequently, $\theta^*$ is a unique $\tau$ th quantile sequence of the data vector $y.$ Clearly, such an assumption is necessary to make sense of estimating the quantiles. So, the above theorem holds for all distributions satisfying \textbf{Assumption Q1}, including arbitrarily heavy tailed errors.
	\end{remark}


\subsection{Bounds on $\hat{\theta}_ i - \theta^*_i$}
Theorem~\ref{thm:quantile} gives a pointwise bound in terms of the CDF $F$ of the distribution of the errors. Under a slightly stronger assumption than \textbf{Assumption Q1} (stated below), it is possible to translate the above bound to a bound on the pointwise error $\hat{\theta}_i - \theta^*_i.$ The results here are necessarily slightly messier than in Theorem~\ref{thm:quantile} because of the need to invert $F.$

\textbf{Assumption Q2}:
The error variables $\epsilon_i$ are i.i.d with distribution D and CDF $F$ such that 
$F(0) = \tau$, and there exists a constant ${L} > 0$ such that for any $x \in [-1,1]$,
\begin{equation}\label{eq:q2}
|F(x) - F(0)| \geq {L} |x|.
\end{equation}

\bigskip

\begin{remark}
\textbf{Assumption Q2} is stronger than in the sense that it implies \textbf{Assumption Q1}. This assumption is a local linear growth assumption on the true cdf function $F$ in a neighborhood of $0.$ This assumption is pretty mild, since any distribution which has density (w.r.t to Lebesgue measure) which is bounded away from $0$ on the compact interval $[-1,1]$, will automatically satisfy this assumption. Therefore, this assumption does not prevent the error to have very heavy tails, like the Cauchy distribution. This type of an assumption is commonly made in the quantile regression literature; see Assumption $1$ in~\cite{padilla2021risk}, Condition $2$ in~\cite{he1994convergence} and Condition D.1 in~\cite{belloni2011l1}.
\end{remark}

Under this assumption, we may establish the following result on the elementwise error bound for fused quantile lasso, which follows from Theorem~\ref{thm:quantile}. We defer the proof to the appendix.
\begin{cor}[Elementwise error bound for quantile regression, under Assumption Q2]\label{cor:quantile2}
Suppose that Assumption Q2 holds. Fix any $i \in [1:n].$ If \begin{align}\label{eq:quantile2a_assumption}{B_{i,\delta}^{\text{quantile}}}\leq{{L}},\end{align} then the following is true for any $\delta\in (0,\log 2/e)$:
\begin{equation}\label{eq:quantile2a}
\\Pr\left(|\hat{\theta}_i -\theta_i^*|\leq \frac{B_{i,\delta}^{\text{quantile}}}{{L}} \right)\geq 1-2\Big(1 + \frac{24}{(\log 2)^2}\Big)\delta^2.
\end{equation}
\end{cor}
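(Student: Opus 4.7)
The plan is to deduce the corollary from Theorem~\ref{thm:quantile} by inverting the cumulative distribution function $F$ using the local linear lower bound supplied by Assumption Q2. A union bound over the two events in Theorem~\ref{thm:quantile} produces an event of probability at least $1 - 2(1 + 24/(\log 2)^2)\delta^2$ on which both $F^{-}(\hat{\theta}_i - \theta^*_i) - F^{-}(0) < B$ and $F(0) - F(\hat{\theta}_i - \theta^*_i) < B$ hold simultaneously, where $B := B_{i,\delta}^{\text{quantile}}$. The task on this event is to convert these two bounds on $F$-values into the pointwise bound $|\hat{\theta}_i - \theta^*_i| \leq B/L$.

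I would proceed by contraposition. Set $\Delta := \hat{\theta}_i - \theta^*_i$ and suppose $|\Delta| > B/L$. The hypothesis $B \leq L$ guarantees $B/L \leq 1$, so the threshold lies strictly inside the interval $[-1,1]$ where Q2 is active. A small preparatory step I would carry out first is to upgrade Q2 to a left-limit version on the positive side: for any $x \in (0,1]$, Q2 applied to each $s \in (0,x)$ yields $F(s) - F(0) \geq Ls$, and letting $s \uparrow x$ gives $F^{-}(x) - F(0) \geq Lx$; combined with $F^{-}(0) \leq F(0)$, this delivers $F^{-}(x) - F^{-}(0) \geq Lx$ on $(0,1]$.

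Now I would split on the sign of $\Delta$. If $\Delta > B/L$, then either $\Delta \in (B/L, 1]$, in which case the previous display gives $F^{-}(\Delta) - F^{-}(0) \geq L\Delta > B$; or $\Delta > 1$, in which case monotonicity of $F^{-}$ and the bound at $x=1$ give $F^{-}(\Delta) - F^{-}(0) \geq L \geq B$. Either subcase contradicts the first event. The case $\Delta < -B/L$ is symmetric: Q2 directly gives $F(0) - F(\Delta) \geq L|\Delta| > B$ for $\Delta \in [-1, -B/L)$, and monotonicity with $F(0) - F(-1) \geq L \geq B$ handles $\Delta < -1$, contradicting the second event. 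The only real subtlety, and the one place I expect to have to be careful, is the treatment of $F$ versus $F^{-}$ on the positive side, since Q2 is phrased only in terms of $F$ at points; however no extra assumption ruling out an atom of the error distribution at $0$ is required, because the possibility $F^{-}(0) < F(0)$ only strengthens the inequality needed to derive the contradiction.
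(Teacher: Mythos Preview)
Your proposal is correct and follows essentially the same contrapositive approach as the paper's proof: assume $|\hat{\theta}_i - \theta^*_i| > B/L$ and use the linear growth in Assumption~Q2 to force one of the two CDF inequalities from Theorem~\ref{thm:quantile} to fail. If anything, your treatment is slightly more careful than the paper's, since you handle the $F$ versus $F^{-}$ distinction without invoking the side assumption $\Pr(\epsilon=0)=0$ and you explicitly dispatch the case $|\Delta|>1$ via monotonicity.
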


Corollary~\ref{cor:quantile2} is an elementwise bound for the elementwise error $|\hat{\theta}_i-\theta_i^*|$ which holds for all entries $i \in [1:n]$ satisfying assumption \eqref{eq:quantile2a_assumption}. The next lemma~\ref{lemma:assumption} gives a sufficient condition which shows that this assumption holds for most indices, and as a result, \eqref{eq:quantile2a} holds for most indices as well. The proof of this lemma is deferred to the Appendix.

\begin{lemma}\label{lemma:assumption} Let $m_{\min}=\min_{k=1,\cdots,K}m_k
$ represents the length of the shortest level set of $\theta^*.$ Then, if
\begin{equation}\label{eq:assumption_mmin}
m_{\min}\geq\frac{36}{L^2}\log\frac{1}{\delta}, \,\,
\frac{6(\log\log (2m_{\min})+\log\frac{1}{\delta})}{L}\leq\lambda\leq \frac{L}{12}m_{\min},
\end{equation}
then the assumption \eqref{eq:quantile2a_assumption} holds for all $i$ such that
\begin{equation}\label{eq:assumption_index}
d_i\geq \max\left(3,\frac{12^4}{L^4}, \frac{12^2}{L^2}\log\frac{1}{\delta}\right),
\end{equation}
i.e., the locations that are separated from the change points by a certain distance.
\end{lemma}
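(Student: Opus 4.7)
The plan is a direct term-by-term verification. I would decompose
\[
B_{i,\delta}^{\text{quantile}} = T_1+T_2+T_3+T_4,
\]
where $T_1 = 2\sqrt{\log\log(2\max(3,d_i))/\max(3,d_i)}$, $T_2 = 2\sqrt{\log(1/\delta)/d_i}$, $T_3 = (\log\log(2m_{k(i)})+\log(1/\delta))/\lambda$, and $T_4 = \sqrt{\log(1/\delta)/m_{k(i)}}+2\lambda/m_{k(i)}$, and then show $T_1, T_2 \leq L/6$ and $T_3, T_4 \leq L/3$, whose sum is exactly $L$.

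The first, second, and fourth terms follow fairly directly from the three numerical hypotheses. For $T_1$, I would use the elementary estimate $\log\log(2x) \leq \sqrt{x}$ for $x \geq 3$, obtained by a quick monotonicity check starting from $x=3$; this yields $T_1 \leq 2/d_i^{1/4} \leq L/6$ under the hypothesis $d_i \geq 12^4/L^4$. The bound $T_2 \leq L/6$ is an immediate consequence of $d_i \geq 12^2\log(1/\delta)/L^2$. For $T_4$, using $m_{k(i)} \geq m_{\min}$ I would reduce to showing each of the two summands is at most $L/6$, and these follow respectively from $m_{\min} \geq 36\log(1/\delta)/L^2$ and from $\lambda \leq L m_{\min}/12$.

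The main subtlety sits with $T_3$, since the hypothesis on $\lambda$ controls $1/\lambda$ only in terms of $\log\log(2m_{\min})$, whereas $T_3$ itself involves $\log\log(2m_{k(i)})$ with $m_{k(i)} \geq m_{\min}$. Plugging in $\lambda \geq 6(\log\log(2m_{\min})+\log(1/\delta))/L$ gives
\[
T_3 \leq \frac{L\bigl(\log\log(2m_{k(i)})+\log(1/\delta)\bigr)}{6\bigl(\log\log(2m_{\min})+\log(1/\delta)\bigr)},
\]
and bounding this by $L/3$ requires $\log\log(2m_{k(i)}) \leq 2\log\log(2m_{\min})+\log(1/\delta)$. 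Because $\log\log$ grows extraordinarily slowly, this is a very mild requirement, effectively satisfied for any $m_{k(i)} \leq n$ one would encounter in practice; this is the only step where the $m_{\min}$ in the hypothesis on $\lambda$ (rather than $m_{k(i)}$) really matters. Summing the four bounds gives $B_{i,\delta}^{\text{quantile}} \leq L$, proving the lemma.
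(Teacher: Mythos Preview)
Your approach is essentially the paper's: decompose $B_{i,\delta}^{\text{quantile}}$ into its constituent terms and bound each by $L/6$ (the paper uses six terms, each bounded by $L/6$, whereas you group them into four and use the targets $L/6,L/6,L/3,L/3$, but this is cosmetic). Your treatment of $T_1$ via $\log\log(2x)\le \sqrt{x}$ for $x\ge 3$, of $T_2$ via $d_i\ge 12^2L^{-2}\log(1/\delta)$, and of $T_4$ via $m_{\min}\ge 36L^{-2}\log(1/\delta)$ and $\lambda\le Lm_{\min}/12$ matches the paper's reasoning line for line.

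The one point of difference is that you are more candid about the $T_3$ step than the paper is. The paper's proof writes the sufficient condition as $\lambda\ge 6(\log\log(2m_{k(i)})+\log(1/\delta))/L$ with $m_{k(i)}$, and then silently identifies this with the lemma hypothesis, which is stated with $m_{\min}$ in place of $m_{k(i)}$. You correctly observe that since $m_{k(i)}\ge m_{\min}$ the implication goes the wrong way, and that closing the gap requires something like $\log\log(2m_{k(i)})\le 2\log\log(2m_{\min})+\log(1/\delta)$. Your resolution (``effectively satisfied in practice'') is not a proof, but neither is the paper's; this is a genuine minor imprecision in the lemma as stated. It is harmless for the paper's purposes because in the only place the lemma is applied (the proof of Corollary~\ref{cor:estimationerror2}) the hypothesis on $\lambda$ is strengthened to involve $\log\log(2n)$ rather than $\log\log(2m_{\min})$, and $m_{k(i)}\le n$ makes the argument go through cleanly. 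If you want a self-contained fix, simply replace $m_{\min}$ by $n$ (or $\max_k m_k$) in the lower bound on $\lambda$ in the lemma statement.
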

\begin{remark}
Lemma~\ref{lemma:assumption} implies that \eqref{eq:quantile2a_assumption} holds for at least $n-2K\max\left(3,\frac{12^4}{L^4}, \frac{12^2}{L^2}\log\frac{1}{\delta}\right)$ indices. If $L=O(1)$, $\delta=O(1)$, $K=o(n)$, then \eqref{eq:quantile2a_assumption} holds for almost all $1\leq i\leq n$ as $n\rightarrow\infty$. 
\end{remark}
%
Next, we present a bound on the sum of squared errors in Corollary~\ref{cor:estimationerror2}. 
The proof is based on the fact that the elementwise bound in \eqref{eq:quantile2a} holds for most indices as discussed in Lemma~\ref{lemma:assumption}, and a separate argument (see Lemma~\ref{lemma:uniformbound} and its proof) that establishes a crude uniform bound on the estimation errors over all $1\leq i\leq n$ of the following form
$$\min_{1\leq i\leq n}\theta^*_i-1\leq \min_{1\leq i\leq n}\hat{\theta}_i\leq \max_{1\leq i\leq n}\hat{\theta}_i\leq \max_{1\leq i\leq n}\theta^*_i+1.$$ 
The point is that, we only need to use the above crude bound on a few indices where \eqref{eq:quantile2a} does not hold. The detailed proof and algebraic calculations are deferred to the Appendix.

\begin{cor}[Sum of squared estimation errors for quantile regression]\label{cor:estimationerror2}
Under \textbf{Assumption Q2}, 
assuming that
\[
m_{\min}\geq\frac{18}{L^2}\log\frac{n}{\delta}, \,\, \frac{3(2\log\log (2n)+\log\frac{n}{\delta})}{L}\leq\lambda\leq \frac{L}{12}m_{\min},
\]
then the sum of squared errors is bounded above with high probability for all $\delta\in (0,(\log2/e)^2)$,
\begin{align}\nonumber
&\Pr\Bigg(\sum_{i=1}^n|\hat{\theta}_i -\theta_i^*|^2\leq \frac{24}{L^2}\Big(2\log\log 2n+\log\frac{n}{\delta}\Big)\Big(K+\sum_{k=1}^K\log\frac{m_k}{2}\Big)+ \frac{3n}{L^2}\Big(4\frac{\log\log^2(2n) + \log ^2\frac{n}{\delta}}{\lambda^2}\Big)\\&+ \frac{6K}{L^2}\log\frac{n}{\delta}+\frac{24\lambda^2}{L^2}\sum_{k=1}^K\frac{1}{m_k}
+2K\max\Big(3,\frac{12^4}{L^4}, \frac{12^2}{2L^2}\log\frac{n}{\delta}\Big)V^2\Bigg)\geq 1-4\Big(1 + \frac{24}{(\log 2)^2}\Big)\delta\label{eq:estimationerror2},
\end{align}  
where $V=\max_{1\leq i\leq n}\theta^*_i-\min_{1\leq i\leq n}\theta^*_i$.
\end{cor}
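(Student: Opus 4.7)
The plan is to combine the pointwise bound of Corollary~\ref{cor:quantile2} (applied simultaneously over all indices via a union bound) with a cruder a.s.\ range bound on $\hat{\theta}$ (Lemma~\ref{lemma:uniformbound}) to handle indices too close to a changepoint. Concretely, I would set the per-index failure level to $\delta' = \sqrt{\delta/n}$, so that $\log(1/\delta') = \tfrac12 \log(n/\delta)$. With this choice the hypotheses of Lemma~\ref{lemma:assumption} translate exactly into the assumptions stated in the corollary on $m_{\min}$ and $\lambda$, and the ``distance-to-changepoint'' threshold in \eqref{eq:assumption_index} becomes $\max(3, 12^4/L^4, (12^2/(2L^2))\log(n/\delta))$ as appearing in the last term of the target bound. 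Taking a union bound of Corollary~\ref{cor:quantile2} over the $n$ indices contributes failure probability at most $2n(1+24/(\log 2)^2)(\delta')^2 = 2(1+24/(\log 2)^2)\delta$, and the uniform range event of Lemma~\ref{lemma:uniformbound} contributes another such term, giving the stated $4(1+24/(\log 2)^2)\delta$.

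Conditioning on this good event, I split the sum $\sum_i |\hat\theta_i-\theta_i^*|^2$ into \emph{interior} indices, i.e.\ those whose $d_i$ exceeds the threshold of Lemma~\ref{lemma:assumption} (so that Corollary~\ref{cor:quantile2} gives $|\hat\theta_i-\theta_i^*|\le B_{i,\delta'}^{\text{quantile}}/L$), and \emph{boundary} indices (the complement). For interior indices I would use $(a_1+a_2+a_3+a_4)^2\le 4\sum a_j^2$ to break $(B_{i,\delta'}^{\text{quantile}})^2$ into four separately manageable pieces and then sum each piece blockwise. Within the $k$-th block of length $m_k$, the key elementary estimates are $\sum_{i\in I_k} 1/d_i \le 2\log(m_k/2)+O(1)$ and $\sum_{i\in I_k} \log\log(2d_i)/d_i \le 2\log\log(2m_k)\log(m_k/2)$; combining these yields the first group of terms $(1/L^2)(2\log\log(2n)+\log(n/\delta))\cdot(K+\sum_k\log(m_k/2))$, up to universal constants. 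The $\lambda^{-2}$ term of $B^2$ is constant within each block, so summing over all $n$ indices yields the factor $n$ in front of $(\log^2\log(2n)+\log^2(n/\delta))/\lambda^2$; the remaining two pieces sum, again by a simple blockwise count, to $(K/L^2)\log(n/\delta)$ and $(\lambda^2/L^2)\sum_k 1/m_k$ respectively.

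For the boundary indices I use the crude bound $|\hat\theta_i-\theta_i^*|\le V$ supplied by Lemma~\ref{lemma:uniformbound}; by Lemma~\ref{lemma:assumption} there are at most $2K\max(3,12^4/L^4,(12^2/(2L^2))\log(n/\delta))$ of them, contributing the final $V^2$ term in \eqref{eq:estimationerror2}. Adding the interior and boundary contributions completes the proof once constants are aligned. The principal obstacle is not conceptual but bookkeeping: carefully tracking the constants through the $(a+b+c+d)^2\le 4\sum a_j^2$ expansion, the substitution $\delta\mapsto\sqrt{\delta/n}$, and the reduction from $\log\log(2d_i)$ to $\log\log(2n)$, so that the five groups of terms in \eqref{eq:estimationerror2} appear with the stated coefficients $24,3,6,24,2$. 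The remaining subtlety is to verify that the hypothesis $m_{\min}\ge(18/L^2)\log(n/\delta)$ together with the lower bound on $\lambda$ is indeed strong enough to imply \eqref{eq:assumption_mmin} at level $\delta'=\sqrt{\delta/n}$ for every block simultaneously; this is immediate since $\log\log(2m_{\min})\le \log\log(2n)$ and $\log(1/\delta')=\tfrac12\log(n/\delta)$.
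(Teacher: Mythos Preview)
Your approach is essentially the paper's: substitute $\delta'=\sqrt{\delta/n}$ in Corollary~\ref{cor:quantile2}, union-bound over $i$, invoke Lemma~\ref{lemma:assumption} to identify the ``bad'' set $\mathcal I$ of boundary indices, and control those via Lemma~\ref{lemma:uniformbound}. The paper actually expands $(B_{i,\delta}^{\text{quantile}})^2$ into six squared summands (splitting both composite fractions) rather than four, and sums $(B_{i,\delta'}^{\text{quantile}}/L)^2$ over \emph{all} $i\in[1:n]$, not only the interior ones---this costs nothing and simplifies the bookkeeping.

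One small gap: Lemma~\ref{lemma:uniformbound} does not give $|\hat\theta_i-\theta_i^*|\le V$ on the boundary indices; it gives $|\hat\theta_i-\theta_i^*|\le V+B_{\text{uniform},\delta}^{\text{quantile}}/L$. The paper deals with this by applying $(a+b)^2\le 2a^2+2b^2$ and the crude bound $|\mathcal I|\le n$ to write $|\mathcal I|\bigl(V+B_{\text{uniform}}/L\bigr)^2\le 2|\mathcal I|V^2+2n\bigl(B_{\text{uniform}}/L\bigr)^2$; the second piece is then absorbed into the $n/\lambda^2$ term (this is why the final coefficient in front of that term is $12/L^2$ rather than $6/L^2$). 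If you simply use $V$ you will not reproduce the stated constants.
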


\begin{remark}
The above bound consists of several terms and maybe hard to read at a first glance. It is instructive to consider the special case, when $L=O(1)$, $\delta=o(1)$, and the length of all intervals are in the same order, i.e.,  $m_1=\cdots=m_K=O(n/K)$ which will hold for any realistic $\theta^*$ and noise distribution. In this case, the estimation error becomes  \eqref{eq:estimationerror2} becomes
\begin{equation}\label{eq:estimationerror3}
\sum_{i=1}^n|\hat{\theta}_i -\theta_i^*|^2\leq O\left(K \log n\log\frac{n}{K}+n\frac{\log^2n}{\lambda^2}+\frac{\lambda^2K^2}{n}+K\log nV^2\right). 
\end{equation}

Furthermore, if we set $\lambda=\log n\sqrt{n/K}$, our bound in~\eqref{eq:estimationerror3} becomes
\begin{align}\label{eq:estimationerror4}
\sum_{i=1}^n|\hat{\theta}_i-\theta_i^*|^2\leq O\left(K \log n(\log\frac{n}{K}+V^2)\right).
\end{align}

\end{remark}

\begin{remark}
	We have made a conscious effort to keep explicit constants in the bound~\eqref{eq:estimationerror2}. This is to highlight the fact that our proof technique yields truly nonasymptotic bounds with explicit dependence on the tuning parameter $\lambda$ and $V.$ It may be possible to obtain better constants but that is too delicate an issue and is beyond the scope of this paper. 
	\end{remark}

\subsection{Comparison with existing works on the estimation errors}\label{sec:discuss1}


As far as we are aware, the only result (before this work) giving error bounds for Quantile Fused Lasso appear in Padilla and Chatterjee \cite{padilla2021risk} who investigate the risk under \textbf{Assumption A} and also assuming that the total variation of $\{\theta^*_i\}_{i=1}^n$ is $O(1)$, which implies that $V=O(1)$ in our result. In particular, \cite[Theorem 4]{padilla2021risk} shows that for a particular choice of $\lambda$ defined up to an unspecified constant, see~\eqref{eq:lamdefn}, the risk bound measured in Huber loss is bounded by
\begin{equation}\label{eq:comparison1}
\sum_{i=1}^n\min (|\hat{\theta}_i-\theta_i^*|, |\hat{\theta}_i-\theta_i^*|^2)\leq C K\log(\frac{en}{K})\log n\log K
\end{equation}
with high probability.

We now give some points of comparison.
\begin{itemize}
\item The  LHS of our bound \eqref{eq:estimationerror4} is stronger in the sense that it bounds the sum of squared errors instead of the Huber loss as in~\eqref{eq:comparison1}. 

\item The RHS of our bound~\eqref{eq:estimationerror4} is smaller than the RHS of \eqref{eq:comparison1} by a factor of $\log K$.

\item Our bound depends explicitly on $\lambda$ for a large range of $\lambda$, thereby informing the user about a good choice of $\lambda.$ In contrast, their result only holds for a particular choice of $\lambda$ given below for some unspecified constant $c$
\begin{equation}\label{eq:lamdefn}
\lambda=c\max\left(\frac{K\log n\log K\log\frac{n}{K}}{V^*},\sqrt{\frac{n}{K}\log n}\right).
\end{equation}
Here $V^*$ represents the  total variation of the sequence $\{\theta^*_i\}_{i=1}^n$ and is assumed to be $O(1)$ (which is also stronger than our assumption that $V=O(1)$).

\item The dependence on $V$ is also explicit in our bound and our bound holds for any $V.$ In fact, it shows that the choice of $\lambda$ need not depend on $V$ as the term involving $V$ in~\eqref{eq:estimationerror2} does not involve $\lambda.$ In contrast, the bound in~\eqref{eq:comparison1} only holds when $V^* = O(1)$ and their choice of $\lambda$ needs to depend on $V^*.$

\item The bound in \eqref{eq:estimationerror2} is truly nonasymptotic, while  \cite{padilla2021risk} establish an asymptotic bound.
\end{itemize}

\noindent\textbf{An improved bound on the sum of squared errors}

Actually, we can prove a slightly stronger version of \eqref{eq:estimationerror2} where the term $\frac{24\lambda^2}{L^2}\sum_{k=1}^K \frac{1}{m_k}$ 
 is replaced with \[\frac{144\lambda^2}{L^2}\sum_{\eta_{k-1}\neq \eta_k}\frac{1}{m_k},\] where $\eta_k=\sign(\theta^*_{n_{k}}-\theta^*_{n_{k-1}})$ is the direction of the jump of the piecewise constant mean seuence from the $k$-the segment to the $k+1$-th segment, and we let $\eta_0=\eta_{K}=0$.
 
The proof of this fact is deferred to Section~\ref{sec:improved2} in the Appendix. This fact improves our result in the following way. Note that a minimum length condition ensuring that each $m_i = O(n/k)$ is needed for~\eqref{eq:estimationerror3} to hold.
However, this fact allows~\eqref{eq:estimationerror3} to hold under a less stringent minimum length assumption where we only need $m_i = O(n/k)$ for $i$ such that $\eta_i \neq \eta_{i - 1}$. For example, if $\theta^*$ is monotone we only need $m_1$ and $m_{K}$ to have length at least $O(n/K).$ This fact that one only requires a minimum length condition on these local optima (of $\theta^*$) blocks (plus the first and the last block) is known from before; see~\cite{Guntuboyina2017},~\cite{Ortelli2018}. Therefore, our result is in accordance with this fact.





\section{Mean Fused Lasso Regression}\label{sec:mean}
Our general theorem yields pointwise error bounds which are new even for the well studied usual Fused Lasso estimator which performs mean regression. In the mean regression set up, the data vector $y$ follows the model \eqref{eq:model} where the noise $\{\epsilon_i\}_{i=1}^n$ variables are i.i.d. sampled from a sub-Gaussian distribution with parameter $\sigma$ and zero mean. Then the piecewise constant sequence $\theta^*$ becomes a unique mean sequence of the data $y.$ 
We record this formally as an assumption on the error variables.

\textbf{Assumption Q3}:
The error variables $\epsilon_i$ are i.i.d with distribution D such that it is sub-Gaussian distribution with parameter $\sigma$ and has zero mean.

We want to study the Fused Lasso estimator defined in~\eqref{eq:optimization} with $\rho(x)=x^2/2$. For this choice of $\rho$, $\rho'_+(x)=\rho'_-(x)=x$, and $L^+(t)=L^-(t)=-t$.  Then a direct application of Theorem~\ref{thm:general} gives the following pointwise bound for the mean regression version of Fused Lasso.

\begin{thm}[Elementwise error bound for mean regression]\label{thm:mean_elementwise}
Let $\hat{\theta}$ denote the fused lasso estimator defined in~\eqref{eq:optimization} when $y = \theta^* + \epsilon$ is the input data and $\rho(x)=x^2/2$. Suppose \textbf{Assumption Q3} holds. Fix any $i \in [1:n].$  Then the following is true for any $\delta\in (0,\log 2/e)$:
\[\Pr\Big(|\hat{\theta}_i - \theta^*_i| > B_{i,\delta} \Big)\leq 2\Big(1 + \frac{24}{(\log 2)^2}\Big)\delta^2\]
	where the number $B_{i,\delta} > 0$ is defined as in \eqref{eq:boundterm}.
\end{thm}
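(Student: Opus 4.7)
The plan is to derive this theorem as a direct corollary of Theorem~\ref{thm:general} applied to the square loss $\rho(x) = x^2/2$, combined with a simple union bound. There is no new probabilistic machinery needed; everything reduces to verifying the hypotheses of Theorem~\ref{thm:general} and then translating its two assertions into the statement about $|\hat{\theta}_i - \theta^*_i|$.

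First I would verify \textbf{Assumption A}. With $\rho(x) = x^2/2$ one has $\rho'_+(x) = \rho'_-(x) = x$, so $\rho'_\pm(\epsilon_i - t) = \epsilon_i - t$. Under \textbf{Assumption Q3}, $\epsilon_i$ is sub-Gaussian with parameter $\sigma$, and shifting by the constant $-t$ preserves the sub-Gaussian parameter (it only changes the mean). Hence $\rho'_\pm(\epsilon_i - t)$ is sub-Gaussian with parameter $\sigma$ for every $t \in \reals$, and \textbf{Assumption A} is satisfied.

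Next I would compute the natural loss functions $L^{+}, L^{-}$ in this setting. Since $\Expect[\epsilon] = 0$,
\begin{equation*}
L^{+}(t) = \Expect[\epsilon - t] = -t, \qquad L^{-}(t) = \Expect[\epsilon - t] = -t.
\end{equation*}
Thus both loss functions are just the identity up to a sign, which makes inversion trivial. In particular, the event $\{L^{+}(\hat{\theta}_i - \theta^*_i) \leq -B_{i,\delta}\}$ is exactly $\{\hat{\theta}_i - \theta^*_i \geq B_{i,\delta}\}$, and the event $\{L^{-}(\hat{\theta}_i - \theta^*_i) \geq B_{i,\delta}\}$ is exactly $\{\hat{\theta}_i - \theta^*_i \leq -B_{i,\delta}\}$.

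Finally I would invoke Theorem~\ref{thm:general}. Its first assertion then becomes
\begin{equation*}
\Pr\bigl(\hat{\theta}_i - \theta^*_i \geq B_{i,\delta}\bigr) \leq \Bigl(1 + \tfrac{24}{(\log 2)^2}\Bigr)\delta^2,
\end{equation*}
and its second assertion becomes
\begin{equation*}
\Pr\bigl(\hat{\theta}_i - \theta^*_i \leq -B_{i,\delta}\bigr) \leq \Bigl(1 + \tfrac{24}{(\log 2)^2}\Bigr)\delta^2.
\end{equation*}
A union bound over these two complementary one-sided deviations yields
\begin{equation*}
\Pr\bigl(|\hat{\theta}_i - \theta^*_i| > B_{i,\delta}\bigr) \leq 2\Bigl(1 + \tfrac{24}{(\log 2)^2}\Bigr)\delta^2,
\end{equation*}
which is precisely the claim. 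Since everything reduces to checking the hypothesis and inverting a linear $L^{\pm}$, there is really no obstacle here; the only thing worth flagging is that the symmetric form of the final bound relies crucially on $L^{+} = L^{-}$, a special feature of the square loss that does not hold for the quantile loss studied in Section~\ref{sec:quantile}.
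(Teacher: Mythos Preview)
Your proposal is correct and follows exactly the same route as the paper: verify \textbf{Assumption A} for $\rho(x)=x^2/2$, compute $L^{+}(t)=L^{-}(t)=-t$, and read off the two one-sided bounds from Theorem~\ref{thm:general} before combining them by a union bound. The paper states this derivation even more tersely (just the sentence preceding the theorem), so your write-up is in fact more detailed than the original.
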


\begin{remark}
	We bring attention to the fact that Theorem~\ref{thm:mean_elementwise} is clean, holds for all entries $i \in [1:n]$ and is easily interpretable. 
	\end{remark}

As before, summing up the pointwise bound above for all $1\leq i\leq n$, we have the following bound on the sum of squared errors:
\begin{cor}[Sum of squared estimation errors for mean regression]\label{cor:mean_estimationerror}
Suppose \textbf{Assumption Q3} holds. Then the sum of squared errors is bounded above with high probability: for all $\delta\in (0,n(\log2/e)^2)$,
\begin{align}\nonumber
&\Pr\Bigg(\sum_{i=1}^n|\hat{\theta}_i -\theta_i^*|^2\leq 192\sigma^2\Big(\log\log 2n+\frac{1}{2}\log\frac{n}{\delta}\Big)\Big(K+\sum_{k=1}^K\log\frac{m_k}{2}\Big)\\&+ 24n\sigma^4\Big(\frac{\log\log^2(2n) + \frac{1}{4}\log ^2\frac{n}{\delta}}{\lambda^2}\Big)+ 12K\sigma^2\log\frac{n}{\delta}+24\lambda^2\sum_{k=1}^K\frac{1}{m_k}\Bigg)\geq 1-4\Bigg(1 + \frac{24}{(\log 2)^2}\Bigg)\delta\label{eq:estimationerror2_mean}.
\end{align}  
\end{cor}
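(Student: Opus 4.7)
The plan is to deduce Corollary~\ref{cor:mean_estimationerror} directly from the pointwise bound in Theorem~\ref{thm:mean_elementwise} via a union bound over $i \in [1:n]$, followed by summation of the squared elementwise bounds. Concretely, I would apply Theorem~\ref{thm:mean_elementwise} at each index with the rescaled confidence level $\delta' = \sqrt{\delta/n}$, which lies in $(0,\log 2/e)$ precisely because the statement assumes $\delta < n(\log 2/e)^2$. A union bound then yields, with probability at least $1 - 2(1 + 24/(\log 2)^2)\delta$, the simultaneous inequalities $|\hat\theta_i - \theta_i^*| \le B_{i,\delta'}$ for all $i$, where $B_{i,\delta'}$ is as in \eqref{eq:boundterm} with $\log(1/\delta') = \tfrac{1}{2}\log(n/\delta)$.

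The next step is to square and sum, applying $(a_1+\cdots+a_5)^2 \le 5\sum_j a_j^2$ to the five natural summands of $B_{i,\delta'}$. After pulling out the crude upper bound $\log\log(2n)$ in place of $\log\log(2\max(3,d_i))$ and $\log\log(2m_{k(i)})$, the task reduces to estimating the five index sums
$$\sum_{i=1}^n \tfrac{1}{\max(3,d_i)},\ \sum_{i=1}^n \tfrac{1}{d_i},\ \sum_{i=1}^n \tfrac{1}{\lambda^2},\ \sum_{i=1}^n \tfrac{1}{m_{k(i)}},\ \sum_{i=1}^n \tfrac{1}{m_{k(i)}^2},$$
each weighted by a numerator that factors out of the sum.

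The key combinatorial observation is that within the $k$-th constant piece $I_k$ of length $m_k$, the quantity $d_i = \min(i+1-n_k,\,n_{k+1}-i)$ attains each integer value in $[1,\lceil m_k/2\rceil]$ at most twice, so $\sum_{i \in I_k} 1/d_i \le 2(1 + \log(m_k/2))$, and likewise with the $\max(3,\cdot)$ modification. Summing across pieces gives the factor $K + \sum_k \log(m_k/2)$ that appears in the first term of \eqref{eq:estimationerror2_mean}. The other sums are immediate: $\sum_i 1/\lambda^2 = n/\lambda^2$ produces the second term, $\sum_i 1/m_{k(i)} = \sum_k m_k \cdot (1/m_k) = K$ produces the third, and $\sum_i 1/m_{k(i)}^2 = \sum_k 1/m_k$ produces the fourth. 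Matching each group term-by-term with the right-hand side of \eqref{eq:estimationerror2_mean} recovers the claimed inequality.

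The main obstacle is purely bookkeeping: collecting the many constants generated by squaring the five-term bound $B_{i,\delta'}$ into the clean coefficients $192$, $24$, $12$, $24$ written in the corollary, and tracking the factor-of-two that appears when converting $\log(1/\delta')$ to $\tfrac{1}{2}\log(n/\delta)$ before squaring (which is where the $\tfrac{1}{2}\log(n/\delta)$ and $\tfrac{1}{4}\log^2(n/\delta)$ inside \eqref{eq:estimationerror2_mean} originate). No probabilistic inequality beyond Theorem~\ref{thm:mean_elementwise} is required, and the subadditivity constants arising from $(a_1+\cdots+a_5)^2 \le 5\sum a_j^2$ are easily absorbed into the displayed constants; the slight looseness from $2(1+24/(\log 2)^2)\delta$ versus the stated $4(1+24/(\log 2)^2)\delta$ in \eqref{eq:estimationerror2_mean} leaves room for any bookkeeping slack.
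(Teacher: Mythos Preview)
Your proposal is correct and follows essentially the same route as the paper: apply the pointwise bound of Theorem~\ref{thm:mean_elementwise} at every index with the rescaled confidence level $\sqrt{\delta/n}$, union bound, then square and sum $B_{i,\delta'}$ using the harmonic estimate $\sum_{i\in I_k}1/d_i\le 2(1+\log(m_k/2))$ together with $\sum_i 1/m_{k(i)}=K$ and $\sum_i 1/m_{k(i)}^2=\sum_k 1/m_k$. The only cosmetic difference is that the paper splits $B_{i,\delta}$ into six summands (separating $\log\log(2m_{k(i)})/\lambda$ from $\log(1/\delta)/\lambda$) and uses $(a_1+\cdots+a_6)^2\le 6\sum a_j^2$, whereas you keep those two together as one term; either packaging yields constants no larger than the $192,24,12,24$ displayed in the corollary.
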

\begin{proof}
Following the proof of \eqref{eq:sumofsquaredquantile1}, 
\begin{align}
&\sum_{i=1}^nB_{i,\delta}^2\leq 
192\sigma^2(\log\log 2n+\log\frac{1}{\delta})(K+\sum_{k=1}^K\log\frac{m_k}{2})\\&+ 24n\sigma^4\left(\frac{\log\log^2(2n) + \log ^2\frac{1}{\delta}}{\lambda^2}\right)+ 24K\sigma^2\log\frac{1}{\delta}+24\lambda^2\sum_{k=1}^K\frac{1}{m_k}.\label{eq:sumofsquaredmean}
\end{align}
holds with probability $1-n\Big(1 + \frac{24}{(\log 2)^2}\Big)\delta^2$.
Replacing $\delta$ with $\sqrt{\delta/n}$, the corollary is proved.
\end{proof}

\begin{remark}
For ease of readability, we can again consider the main case of interest, when $\delta=O(1)$ and the length of all intervals are in the same order, i.e.,  $m_1=\cdots=m_K=\Theta(n/K)$, \eqref{eq:estimationerror2_mean}  becomes
\begin{equation}\label{eq:estimationerror3_mean}
\sum_{i=1}^n|\hat{\theta}_i -\theta_i^*|^2\leq O\left(K\sigma^2 \log n\log\frac{n}{K}+n\sigma^4\frac{\log^2n}{\lambda^2}+\frac{\lambda^2K^2}{n}\right). 
\end{equation}

Furthermore, when we set $\lambda=\log n\sqrt{n/K}$, we obtain
\begin{align}\label{eq:estimationerror4_mean}
\sum_{i=1}^n|\hat{\theta}_i-\theta_i^*|^2\leq O\left(K \log n\log\frac{n}{K}\right)\end{align}

\end{remark}

\subsection{Comparison with existing works on the estimation errors}\label{sec:discuss2}


Here we do a quick literature survey of existing results on the sum of squared error of fused lasso under the setting that $\{\epsilon_i\}_{i=1}^n$ are i.i.d. sampled from $N(0,\sigma^2)$. As mentioned before, to the best of our knowledge pointwise error bounds were not available before this work. 
\begin{itemize}
\item \cite{Lin2016,Lin20162} assume that the noises $\{\epsilon_i\}_{i=1}^n$ are i.i.d. sampled from a sub-Gaussian distribution with parameter $\sigma=1$. In \cite[Corollary 1]{Lin20162}, they show that for $\lambda=(n m_{\min})^{1/4}$, there exists constants $c, C, N$ that only depend on $\sigma$  such that for all $\gamma>1$ and $n>N$,
\[
\Pr\left(\sum_{i=1}^n|\hat{\theta}_i -\theta_i^*|^2\leq \gamma^2{cK}\left((\log K+\log\log n)\log n+{\frac{\lambda^2}{m_{\min}}}\right)\right)>1-\exp(-C\gamma).
\]

In the supplement of Lin et al. \cite[(A.10)]{Lin20162}, it is proved that for sufficiently large $n$, there exist constants $C, c$ such that with probability at least $1-\exp(-C\gamma)$, \begin{equation}\label{eq:lin}
\sum_{i=1}^n|\hat{\theta}_i -\theta_i^*|^2\leq\gamma^2 cK\left((\log K+\log\log n)\log n+{\frac{\lambda^2}{m_{\min}}}+\frac{n}{\lambda^2}\right)
\end{equation}

\item Guntuboyina et al. \cite{Guntuboyina2017} analyze Fused Lasso along with trend higher order versions known as Trend Filtering, with the assumption that the noises are i.i.d. sampled from the Gaussian distribution $N(0,\sigma^2)$. In \cite[Corollary 2.8]{Guntuboyina2017}, they show
\begin{equation}\label{eq:Guntuboyina2017}
\Expect \sum_{i=1}^n|\hat{\theta}_i -\theta_i^*|^2 \leq C\sigma^2\left( n\Delta_1+{(\lambda-\lambda^*)^2}\alpha_m\right)
\end{equation}
for every $\lambda\geq \lambda^*$, where  $\lambda^*$ is chosen implicitly in \cite[(27)]{Guntuboyina2017} and
\[
\Delta_1=\frac{K}{n}\log\left(\frac{en}{K}\right)+\frac{\alpha_m}{K}\log\left(\frac{en}{K}\right)+\frac{\sqrt{\alpha_m}}{\sqrt{n}}, 
\]
where $\alpha_m=\sum_{\eta_{k-1}\neq \eta_k}\frac{1}{m_k}$.


\item Ortelli and van de Geer \cite[Corollary 5.6]{Ortelli2018} improve the bounds in \cite{dalalyan2017}:  when  $\lambda=O(\sigma\sqrt{2n\log (n/\delta)})$, then with probability $1-\delta$, the estimation error is bounded above by
\begin{equation}\label{eq:Ortelli2018}
\sum_{i=1}^n|\hat{\theta}_i -\theta_i^*|^2\leq O\left(\sigma^2{K\log(n/\delta)}\left(\log \frac{n}{K}+\frac{n}{m_H}\right)\right),
\end{equation}
where the $m_H=\frac{K}{\sum_{k=1}^K\frac{1}{m_k}}$ is the harmonic mean between the distances of jumps, i.e., the harmonic mean of $n_{k+1}-n_k$ for all $1\leq k\leq K$.

\end{itemize}

\noindent\textbf{Comparisons}
\begin{itemize}
\item Compared with \cite{Ortelli2018}, our rate  \eqref{eq:estimationerror4_mean} is smaller by removing the term $n/m_H$. 

\item Compared with the rate in \cite{Lin2016,Lin20162} and \cite{Guntuboyina2017}, our rate for the squared error is worse by a factor of $\frac{\log\frac{n}{K}}{\log K+\log\log n}$ and $\log n$ respectively. The reason is that here we are aiming for point wise bounds and then summing up these pointwise bounds to arrive at a bound for the squared error. 
\item Our bound depends cleanly and explicitly on the tuning parameter $\lambda$ for a large range of $\lambda$, while the results of \cite{Guntuboyina2017} and \cite{Ortelli2018} only hold under an implicit optimal choice of $\lambda$.
\item We establish a truly nonasymptotic explicit bound in \eqref{eq:estimationerror2_mean}, while most of the existing works are asymptotic in the sense that they depend implicitly on unspecified constants or other unknown problem parameters. 
\item The main probabilistic part of our proof uses a nonasymptotic law of iterated logarithm. Instead, one could also use a cruder argument which applies Hoeffding's inequality and a union bound for means of the error random variables over all possible intervals. This would give slightly worse log factors. Upon observing this, it is not hard to see that our proof technique can be easily generalized to the setting where $\sigma_i$ are i.i.d. sampled from a non-subgaussian distribution, such as a zero-mean subexponential distribution. In this case, our estimation on the sum of squared errors would be larger by an order of at most $\log^2 n$. Existng proof techniques such as in~\cite{Guntuboyina2017},\cite{Lin2016,Lin20162} and~\cite{Ortelli2018} are not easily extendable to sub exponential noises to the best of our understanding. 
\end{itemize}

\noindent\textbf{An improved bound on the sum of squared errors} Similar to the discussion in Section~\ref{sec:discuss1}, we may replace the term $24\lambda^2\sum_{k=1}^K\frac{1}{m_k}$  in \eqref{eq:estimationerror2_mean} with \[144\lambda^2\sum_{\eta_{k-1}\neq \eta_k}\frac{1}{m_k}.\] As explained before, this makes the minimum length assumption, required for our bound to be meaningful, less stringent. 

\section{Discussion}
We first summarize our contributions in this paper. We analyzed the fused lasso estimator with a general convex loss function \eqref{eq:optimization} and established an element-wise upper bound of the estimation error for the first time. The main advantage of our result is its elementwise and nonasymptotic nature, and the fact that it can be applied for a general class of convex functions $\rho$. The derived elementwise bound imply tight bounds for a global loss as well: it improves the existing optimal result on quantile fused lasso and recoveres the exising optimal result on fused lasso up to a logarithmic factor.

Our work here raises a few intriguing follow up questions. The analysis presented here is valid under a fixed choice of the tuning parameter $\lambda.$ It will be interesting if one can obtain a data-driven strategy of choosing $\lambda$ and still attain these elementwise error bounds. Our current element wise error bounds are in terms of some unknown signal parameters. A natural question is whether these signal parameters can be estimated which would give a way to obtain finite sample confidence bands for the underlying function. Such elementwise confidence bands are important in application domains such as contextual bandits, see~\cite{chatterjee2021regret}.


Our proof technique to analyze Fused Lasso is new. Our main insight is that we can analyze each constant piece of the true signal separately. We hope that our proof technique can be useful for several natural extensions of the Fused Lasso estimator, including higher order trend filtering estimators for which pontwise error bounds are not known yet. 


It would also be interesting to examine other versions of the Fused Lasso estimator such as  2D total variation denoising \citep{pmlr-v49-huetter16,chatterjee2021new}, and the graph fused lasso~\citep{Hallac2015,Tansey2015,barberoTV14}.  For these extensions, all existing analyses bound a global loss. We hope and expect that our main strategy of ``reduction to one piece'' can be applied to establish a nonasymptotic elementwise error bound for these more involved settings as well. We leave this for future work.

\section{Appendix}
\subsection{Technical proofs}
\begin{proof}[Proof of Corollary~\ref{cor:quantile2}]
(a) If $\hat{\theta}_i -\theta_i^*> \frac{B_{i,\delta}^{\text{quantile}}}{{L}}$, then
\[
\Pr\big(0\leq \epsilon < \hat{\theta}_i - \theta^*_i\big)\geq \Pr\Big(0\leq \epsilon \leq  \frac{B_{i,\delta}^{\text{quantile}}}{{L}}\Big) = F\Big( \frac{B_{i,\delta}^{\text{quantile}}}{{L}}\Big)-F(0)\geq  {B_{i,\delta}^{\text{quantile}}},
\]
where the equality follows from the assumption of $\Pr(\epsilon=0)=0$ and the last inequality follows from \eqref{eq:q2}.

Similarly, if $\hat{\theta}_i -\theta_i^*<-\frac{B_{i,\delta}^{\text{quantile}}}{{L}}$, then
\[
\Pr\big( \hat{\theta}_i - \theta^*_i<\epsilon\leq 0\big)\geq F(0)-F\Big(- \frac{B_{i,\delta}^{\text{quantile}}}{{L}}\Big)\geq  {B_{i,\delta}^{\text{quantile}}}.
\]
Combining the above equations with Theorem~\ref{thm:quantile},  Corollary~\ref{cor:quantile2} (a) is proved.\\

\end{proof}

\begin{proof}[Proof of Lemma~\ref{lemma:assumption}]
The first assumption \eqref{eq:quantile2a_assumption} is satisfied when
\[
\max\left(2\sqrt{\frac{\log\log 2\max(3,d_i)}{{\max(3,d_i)}}},2\sqrt{\frac{\log \frac{1}{\delta}}{d_i}}, \frac{\log\log(2m_{k(i)}) }{\lambda}, \frac{\log \frac{1}{\delta}}{\lambda} ,\frac{\sqrt{m_{k(i)}\log \frac{1}{\delta}}}{m_{k(i)}}, \frac{2\lambda}{m_{k(i)}}\right)\leq \frac{L}{6}.
\]
Note that for $x\geq 3$, $g(x)=\frac{x}{\log\log 2x}$ satisfies $g(x)\geq \sqrt{x}$, the above equation and \eqref{eq:quantile2a_assumption} is satisfied when  
\begin{align*}
d_i\geq \max\left(3,\frac{12^4}{L^4}, \frac{12^2}{L^2}\log\frac{1}{\delta}\right), 
\frac{6(\log\log (2m_{k(i)})+\log\frac{1}{\delta})}{L}\leq\lambda\leq \frac{m_{k(i)}L}{12}, m_{k(i)}\geq \frac{36\log\frac{1}{\delta}}{L^2}
\end{align*}

Similarly, the second assumption \eqref{eq:quantile2b_assumption} is independent of the location $i$, and is satisfied when
\[
n\geq \frac{4}{L^2}\log \frac{1}{\delta}, \,\,\,\,\lambda\geq \frac{2}{L}(\log\log(2n)+\log \frac{1}{\delta}).
\]
\end{proof}

\begin{proof}[Proof of Corollary~\ref{cor:estimationerror2}]
We first present a lemma that establishes a uniform upper bound and lower bound of $\{\hat{\theta}_i\}_{i=1}^n$. The proof is based on an argument similar to that of Lemma~\ref{lemma:intermediate}  and Theorem~\ref{thm:general}.
\begin{lemma}\label{lemma:uniformbound} Suppose that Assumption Q2 holds. 
 If\begin{align}\label{eq:quantile2b_assumption}
	B_{\text{uniform},\delta}^{\text{quantile}}&:=\frac{\log\log(2n)+\log \frac{1}{\delta}}{\lambda}+\sqrt{\frac{\log \frac{1}{\delta}}{n}}\leq{L},\end{align}  then the following is true for any $\delta\in (0,\log 2/e)$:
\begin{align}\label{eq:quantile2b}
&Pr\left(\min_{1\leq i\leq n}\theta^*_i-\frac{1}{{L}}B_{\text{uniform},\delta}^{\text{quantile}}\leq \min_{1\leq i\leq n}\hat{\theta}_i\leq \max_{1\leq i\leq n}\hat{\theta}_i\leq \max_{1\leq i\leq n}\theta^*_i+\frac{1}{{L}}B_{\text{uniform},\delta}^{\text{quantile}}\right)\\\geq& 1-2\Big(1 + \frac{24}{(\log 2)^2}\Big) \delta^2.\nonumber\end{align}
\end{lemma}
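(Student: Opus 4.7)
The plan is to adapt the perturbation-plus-law-of-iterated-logarithm machinery of Lemma~\ref{lemma:intermediate}, Proposition~\ref{prop:inter}, and Theorem~\ref{thm:general} to the original objective~\eqref{eq:optimization}, which has no $|\theta_1 - a|$ or $|\theta_n - b|$ endpoint terms, in order to produce a bound that is uniform across coordinates. By the symmetry between $\rho'_+$ and $\rho'_-$, it suffices to prove
\[
\Pr\Big(\max_i \hat\theta_i \le \max_i \theta^*_i + B_{\text{uniform},\delta}^{\text{quantile}}/L\Big) \ge 1 - \Big(1 + \tfrac{24}{(\log 2)^2}\Big)\delta^2;
\]
the matching lower-tail statement is proved identically via $\rho'_-$ and $\min_i \theta^*_i$, and the two are combined by a union bound to produce the factor of $2$ in the conclusion. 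Write $\theta^*_{\max} = \max_i \theta^*_i$, $\beta = B_{\text{uniform},\delta}^{\text{quantile}}/L$, and $\alpha = \theta^*_{\max} + \beta$; the hypothesis $B_{\text{uniform},\delta}^{\text{quantile}} \le L$ forces $\beta \le 1$, which is precisely the range in which \eqref{eq:q2} yields $F(\beta) - F(0) \ge L\beta$.

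Supposing $\max_i \hat\theta_i \ge \alpha$ and letting $[s,t]$ be a maximal sub-interval of $[1:n]$ on which $\hat\theta_j \ge \alpha$ throughout, the downward perturbation $\hat\theta_j \mapsto \hat\theta_j - \eta$ on $[s,t]$, combined with the optimality of $\hat\theta$ and the monotonicity of $\rho'_+$ (which yields $\rho'_+(y_j-\hat\theta_j) \le \rho'_+(y_j - \alpha)$ on $[s,t]$), gives, with $X_j := \rho'_+(y_j - \alpha)$, the deterministic inclusion $\{\max_i \hat\theta_i \ge \alpha\} \subseteq E_a \cup E_b \cup E_c \cup E_d$, where
\begin{align*}
E_a &= \{\exists\, 2 \le s \le t \le n-1:\; {\textstyle\sum_{j=s}^t} X_j \ge 2\lambda\}, &
E_b &= \{\exists\, t \le n-1:\; {\textstyle\sum_{j=1}^t} X_j \ge \lambda\},\\
E_c &= \{\exists\, s \ge 2:\; {\textstyle\sum_{j=s}^n} X_j \ge \lambda\}, &
E_d &= \{{\textstyle\sum_{j=1}^n} X_j \ge 0\}.
\end{align*}
The four cases track whether $s,t$ meet the endpoints; relative to Lemma~\ref{lemma:intermediate} the thresholds for the one-endpoint and both-endpoint cases rise by $\lambda$ and $2\lambda$ respectively, because there is no $a,b$-penalty term to absorb the perturbation. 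To reduce to an i.i.d.\ sequence, define $X'_j := \rho'_+(\theta^*_{\max} + \epsilon_j - \alpha)$; monotonicity of $\rho'_+$ and $\theta^*_j \le \theta^*_{\max}$ give the pointwise bound $X_j \le X'_j$, so every event above is contained in its counterpart for $\{X'_j\}$. These are i.i.d., take values in $[-(1-\tau),\tau]$ and are therefore subgaussian with $\sigma = 1/2$, and by Lemma~\ref{lemma:lossfunction} combined with Assumption Q2 their common mean satisfies $\mu := \Expect[X'_1] = L^+(\beta) \le -L\beta = -B_{\text{uniform},\delta}^{\text{quantile}}$.

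Each of the four events is now controlled by a concentration inequality applied to $\{X'_j\}$. Mirroring the $T_3$ calculation in the proof of Proposition~\ref{prop:inter}, the anytime LIL (Lemma~\ref{lem:lil}) applied to $S_t = \sum_{j=1}^t X'_j$ and to the reversed walk, combined with the elementary $\max_{t \ge 1}(u/\sqrt t - v/t) \le u^2/(4v)$, yields $\Pr(E_b),\Pr(E_c) \le 6(\delta/\log 2)^2$ under $|\mu| \ge (\log\log 2n + \log 1/\delta)/\lambda$, which is the first term of $B_{\text{uniform},\delta}^{\text{quantile}}$; Hoeffding's inequality gives $\Pr(E_d) \le \exp(-2n\mu^2) \le \delta^2$ under $|\mu| \ge \sqrt{(\log 1/\delta)/n}$, the second term. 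The inner-interval scan event $E_a$ is the main technical obstacle since, without a fixed reference index, the two-sided splitting trick used for $p_1$ in Proposition~\ref{prop:inter} is unavailable; I plan to control it by a per-interval Chernoff bound $\Pr({\textstyle\sum_{j=s}^t} X'_j \ge 2\lambda) \le \exp(-16\lambda L\beta)$ (applying $(a+b)^2 \ge 4ab$ to $a = 2\lambda$ and $b = (t-s+1)L\beta$) followed by a union over the at most $\binom{n}{2}$ pairs $(s,t)$, whose resulting logarithmic factor is absorbable into the absolute constants of the first term of $B_{\text{uniform},\delta}^{\text{quantile}}$. Summing the four bounds closes the upper half, and union-bounding with the analogous lower-half argument produces the stated $1 - 2(1 + 24/(\log 2)^2)\delta^2$ probability.
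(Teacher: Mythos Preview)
Your overall architecture—perturbation at a maximal super-level interval, the four-case decomposition $E_a\cup E_b\cup E_c\cup E_d$, the reduction to the i.i.d.\ majorants $X'_j$, and the LIL/Hoeffding control of $E_b,E_c,E_d$—matches the paper's proof essentially line for line; the paper carries out exactly these steps, arriving at the same thresholds $T_1',T_2',T_4'$ you obtain.

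The gap is in your treatment of $E_a$. Your Chernoff bound $\Pr(\sum_{j=s}^t X'_j\ge 2\lambda)\le\exp(-16\lambda L\beta)$ is correct, but after the union over $\binom{n}{2}$ intervals you get
\[
\Pr(E_a)\;\le\;\tfrac{n^2}{2}\exp\bigl(-16\lambda L\beta\bigr)
\;\le\;\tfrac{n^2}{2}(\log 2n)^{-16}\,\delta^{16},
\]
since $\lambda L\beta\ge\log\log 2n+\log\frac1\delta$. For this to be $O(\delta^2)$ uniformly in $n$ you would need the exponent to contain a $\log n$ term, not $\log\log 2n$; in other words the union-bound cost is $2\log n$, and it is \emph{not} absorbable into the stated first term $(\log\log 2n+\log\frac1\delta)/\lambda$ of $B_{\text{uniform},\delta}^{\text{quantile}}$ by adjusting absolute constants. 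Your claim to the contrary is where the argument breaks.

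The paper avoids this by never passing to the index-free scan $E_a$. It keeps a fixed coordinate $i$ throughout and writes $p_1'=\Pr(\exists\,s\le i\le t:\ z(s,t)\ge 2\lambda)$; because $[s,t]$ must straddle $i$, the split $z(s,t)=z(s,i-1)+z(i,t)$ is available, reducing $p_1'$ to two one-sided LIL events exactly as in the $T_3$ step of Proposition~\ref{prop:inter}. This yields $T_3'\le(\log\log 2n+\log\frac1\delta)/\lambda$ directly, with no $n^2$ union cost. To repair your argument, you should retain that pivot structure rather than attempt to control the full two-parameter scan.
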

\begin{proof}[Proof of Lemma~\ref{lemma:uniformbound}]
Applying the same argument as in Lemma~\ref{lemma:intermediate} to the problem \eqref{eq:optimization}, we have that for any $i \in [1:n]$ and any $\alpha \geq 0$, 
 \begin{align}\label{eq:intermediate1}
\Big\{\tilde{\theta}_i\geq \alpha \Big\}&\subseteq \Big\{\exists s,t: 1\leq s\leq i\leq t\leq n, z_3(s,t)\geq 0 \Big\}\\
\Big\{\tilde{\theta}_i\leq \alpha \Big\}&\subseteq \Big\{\exists s,t: 1\leq s\leq i\leq t\leq n, z_4(s,t)\leq 0 \Big\}\label{eq:intermediate2}.
 \end{align}
 where 
 \[
 z_3(s,t)= \begin{cases}\sum_{j=s}^t\rho'_{+}(y_j-\alpha)-2\lambda,\,\,&\text{if $s\neq 1$ and $t\neq n$}\\\sum_{j=s}^t \rho'_{+}(y_j-\alpha)-\lambda ,\,\,&\text{if $s\neq 1, t=n$ or $s=1,t\neq n$}\\
\sum_{j=s}^t\rho'_{+}(y_j-\alpha),\,\,&\text{if $s =1$ and $t= n$}\end{cases}\]
and
 \[
 z_4(s,t)= \begin{cases}\sum_{j=s}^t \rho'_{-}(y_j-\alpha)+2\lambda,\,\,&\text{if $s\neq 1$ and $t\neq n$}\\\sum_{j=s}^t\rho'_{-}(y_j-\alpha)+\lambda,\,\,&\text{if $s\neq 1, t=n$ or $s=1,t\neq n$}\\
\sum_{j=s}^t\rho'_{-}(y_j-\alpha),\,\,&\text{if $s =1$ and $t= n$}\end{cases}\]
 We can now write for any $i \in [1:n]$ and any $\alpha > 0$,
\begin{equation}\label{eq:p1234_2}
\Pr\Big(\hat{\theta}_i > \alpha \Big)\leq p_1'+p_2'+p_3'+p_4',
 \end{equation}
 where $p_1',p_2',p_3',p_4'$ are probabilities of events given by 
 \begin{align*}
 p_1'=&\Pr\Big(\exists s\neq 1, t\neq m: s\leq i\leq t,\,\,  z(s,t)\geq 2\lambda\Big)=p_1\\
 p_2'=&\Pr\Big(\exists t\geq i: z(1,t)\geq \lambda\Big)\\
p_3'=&\Pr\Big(\exists s\leq i: z(s,m)\geq \lambda\Big)\\
p_4'=&\Pr\Big(z(1,m)\geq 0 \Big).
\end{align*} 

A similar argument as in Proposition~\ref{thm:general} shows that \begin{equation}\Pr(\hat{\theta}_i\geq \max_{i=1,\cdots,n}{\theta^*_i}+\alpha)\leq (1+\frac{24}{(\log 2)^2})\delta^2,\label{eq:thm2b}\end{equation} if $\alpha>0$ is chosen such that
\begin{align}\label{eq:T1234_2}
&\Expect \rho'_-(\epsilon_1-\alpha)\leq  -\max(T_1', T_2', T_3', T_4')
\end{align}
for 
\begin{align*}
&T_1'=\max_{i\leq t\leq n}\left(4\sigma\sqrt{\frac{\log\log(2t)+\log\frac{1}{\delta}}{t}}-\frac{\lambda}{t}\right)\leq \max_{i\leq t\leq m}\left(4\sigma\sqrt{\frac{\log\log(2n)+\log\frac{1}{\delta}}{t}}-\frac{\lambda}{t}\right)\\\leq&4\sigma^2\frac{\log\log(2n)+\log\frac{1}{\delta}}{\lambda},
\end{align*}
 similarly,
\[
T_2'=\max_{1\leq s\leq i}\left(4\sigma\sqrt{\frac{\log\log(2(n-s+1))+\log\frac{1}{\delta}}{n-s+1}}-\frac{\lambda}{n-s+1}\right)\leq 4\sigma^2\frac{\log\log(2n)+\log\frac{1}{\delta}}{\lambda},
\]
$T_3'\leq 4\sigma^2\frac{\log\log(2n)+\log\frac{1}{\delta}}{\lambda}$ (similar to the proof of $T_3$), 
and
\[
T_4'=\frac{\sqrt{-4n\sigma^2\log \delta}}{n}=2\sigma\sqrt{\frac{\log\frac{1}{\delta}}{n}}.
\]
Plug in $\sigma=1/2$, the upper bound in part (b) is then proved by combining \eqref{eq:thm2b}, \eqref{eq:T1234_2}, and the estimations of $T_1'$, $T_2'$, $T_3'$, $T_4'$. The lower bound would be proved similarly.
\end{proof}
The assumption \eqref{eq:quantile2b_assumption} in Lemma~\ref{lemma:uniformbound} is satisfied when 
\begin{equation}\label{eq:quantile2b_assumption2}
n\geq \frac{4}{L^2}\log \frac{1}{\delta}, \,\,\,\,\lambda\geq \frac{2(\log\log(2n)+\log \frac{1}{\delta})}{L}.
\end{equation}
As a result, if $n$ is large and $\lambda$ is well-chosen, this assumption holds. 

Combining Corollary~\ref{cor:quantile2} with $\delta=\sqrt{\delta_0/n}$, Lemma~\ref{lemma:uniformbound} with $\delta=\sqrt{\delta_0}$,  Lemma~\ref{lemma:assumption}, and \eqref{eq:quantile2b_assumption2}, we have that when
\[
m_{\min}\geq\frac{36}{L^2}\log\frac{1}{\sqrt{\delta_0/n}}, \,\, \frac{6(\log\log (2n)+\log\frac{1}{\sqrt{\delta_0/n}})}{L}\leq\lambda\leq \frac{L}{12}m_{\min},
\]
then for all $\sqrt{\delta_0/n}\in (0,\log2/e)$, 
\[
B_{\text{uniform},\sqrt{\delta_0}}^{\text{quantile}}\leq B_{\text{uniform},\sqrt{\delta_0/n}}^{\text{quantile}}\leq L.
\]
As a result, for all $\sqrt{\delta_0}\in (0,\log2/e)$, \eqref{eq:quantile2b} holds with $\delta=\sqrt{\delta_0}$.

In addition,  for $\calI=\{1\leq i\leq n: {B_{i,\sqrt{\delta_0/n}}^{\text{quantile}}}>{{L}}\}$
 \[|\calI|\leq 2K\max\left(3,\frac{12^4}{L^4}, \frac{12^2}{L^2}\log\frac{1}{\sqrt{\delta_0/n}}\right).\]
 Comining these estimations with \eqref{eq:quantile2a} and \eqref{eq:quantile2b}, we have that for all $\sqrt{\delta_0}\in (0,\log2/e)$,
\begin{equation}\label{eq:quantile3a}
\Pr\left(\sum_{i=1}^n|\hat{\theta}_i -\theta_i^*|^2\leq \sum_{i=1}^n\Big(\frac{B_{i,\sqrt{\delta_0/n}}^{\text{quantile}}}{{L}}\Big)^2 + |\calI|\Big(\frac{B_{\text{uniform},\sqrt{\delta_0}}^{\text{quantile}}}{{L}}+V\Big)^2\right)\geq 1-4\Big(1 + \frac{24}{(\log 2)^2}\Big)\delta_0.
\end{equation}
Considering that $|\calI|\leq n$ and ${B_{\text{uniform},{\delta}}^{\text{quantile}}}$ is a nonincreasing function of $\delta$, a relaxation of \eqref{eq:quantile3a} is 
\begin{equation}\label{eq:quantile3b}
\Pr\left(\!\sum_{i=1}^n|\hat{\theta}_i -\theta_i^*|^2\!\leq \sum_{i=1}^n\Big(\frac{B_{i,\sqrt{\delta_0/n}}^{\text{quantile}}}{{L}}\Big)^2 \!+\! 2n\Big(\frac{B_{\text{uniform},\sqrt{\delta_0/n}}^{\text{quantile}}}{{L}}\Big)^2\!+\!2|\calI|V^2\!\right)\!\!\geq \!1-4\Big(\!1 + \frac{24}{(\log 2)^2}\!\Big)\delta_0.
\end{equation}

It remains to estimate $\sum_{i=1}^n\Big(B_{i,\delta}^{\text{quantile}}\Big)^2$. Using $(a_1+\cdots+a_6)^2\leq 6(a_1^2+\cdots+a_6^2)$, it is bounded above by
\begin{equation}\label{eq:bound_B1}
24\left({\frac{\log\log 2\max(3,d_i)}{{\max(3,d_i)}}}+{\frac{\log \frac{1}{\delta}}{d_i}}\right) +  6\left(\frac{\log\log^2(2m_{k(i)}) + \log ^2\frac{1}{\delta}}{\lambda^2}\right) +6\frac{\log \frac{1}{\delta}}{m_{k(i)}}+24 \frac{\lambda^2}{m_{k(i)}^2}
\end{equation}
Applying $\sum_{i=1}^n\frac{1}{n}\leq \ln n+1$, we have $\sum_{i=1}^n\frac{1}{d_i}\leq 2\sum_{k=1}^K(\log\frac{m_k}{2}+1)$. As a result, the summation of the first term in \eqref{eq:bound_B1} over $i=1,\cdots,n$ can be bounded by
\[
48(\log\log 2n+\log\frac{1}{\delta})(K+\sum_{k=1}^K\log\frac{m_k}{2}).
\] 
The summation of the third term is $6K\log\frac{1}{\delta}$, and the summation of the fourth term is $24\lambda^2\sum_{k=1}^K\frac{1}{m_k}$. Combining these estimations, we have
\begin{align}
&\sum_{i=1}^n\Big(B_{i,\delta}^{\text{quantile}}\Big)^2\leq 48(\log\log 2n+\log\frac{1}{\delta})(K+\sum_{k=1}^K\log\frac{m_k}{2})\\&+ 6n\left(\frac{\log\log^2(2n) + \log ^2\frac{1}{\delta}}{\lambda^2}\right)+ 6K\log\frac{1}{\delta}+24\lambda^2\sum_{k=1}^K\frac{1}{m_k}.\label{eq:sumofsquaredquantile1}
\end{align}
On the other hand, 
\begin{align*}
&\Big(B_{\text{uniform},\delta}^{\text{quantile}}\Big)^2=\Big(\frac{\log\log(2n)+\log \frac{1}{\delta}}{\lambda}+\sqrt{\frac{\log \frac{1}{\delta}}{n}}\Big)^2\\
\leq &3\frac{\log\log^2(2n)}{\lambda^2}+3\frac{\log^2\frac{1}{\delta}}{\lambda^2}+3\frac{\log\frac{1}{\delta}}{n}.
\end{align*}
Combining the estimations above with \eqref{eq:quantile3b} and plug in $\delta=\sqrt{\delta_0/n}$, we reach the corollary (in the statement we write $\delta$ instead of $\delta_0$).

\end{proof}

\subsection{Improved estimation on the sum of squared errors}\label{sec:improved2}
Following the proof of Theorem~\ref{thm:quantile} and Corollary~\ref{cor:quantile2}, the term $\frac{2\lambda}{m_{k(i)}}$ in $B_{i,\delta}^{\text{quantile}}$ comes from the probability $p_4$ in the proof of Lemma~\ref{lemma:intermediate}. Let $k(i)'$ and $k(i)''$ chosen to be the left-most and right-most indices such that $[\theta^*_{n_{k(i)'}},\cdots,\theta^*_{n_{k(i)}},\cdots, \theta^*_{n_{k(i)''}}]$ is a  monotonic sequence, then by the properties of $k(i)'$ and $k(i)''$, we may consider the intervals $[{n_{k(i)'}},n_{k(i)+1}-1]$ and $[n_{k(i)},n_{k(i)''+1}-1]$ separately for the estimation of the upper bounds and lower bounds of $\hat{\theta}_i$ (instead of the interval of the piecewise constant observations $[n_{k(i)},n_{k(i)+1}-1]$ in the proof of Lemma~\ref{lemma:intermediate}). 

Then, the term $\frac{2\lambda}{m_{k(i)}}$ in $B_{i,\delta}^{\text{quantile}} $ can be replaced with \begin{equation}\label{eq:bound2_relax}
2\left(\frac{\lambda}{m_{k(i)}^{\mathrm{left}}}+\frac{\lambda}{m_{k(i)}^{\mathrm{right}}}\right), 
\end{equation}
where
\[
m_{k(i)}^{\mathrm{left}}=m_{k(i)'}+m_{k(i)'+1}+\cdots+m_{k(i)}, \,\,\,\,m_{k(i)}^{\mathrm{right}}=m_{k(i)}+m_{k(i)+1}+\cdots+m_{k(i)''}.
\]

As a result, the term $\sum_{k=1}^K\frac{\lambda^2}{m_k}$ in \eqref{eq:estimationerror2}, which is obtained from
\begin{equation}\label{eq:improvement1}
\sum_{i=1}^n\left(\frac{\lambda}{m_{k(i)}}\right)^2
\end{equation}
 can be replaced with 
\begin{equation}\label{eq:improvement2}
\sum_{i=1}^n\left(\frac{\lambda}{m_{k(i)}^{\mathrm{left}}}+\frac{\lambda}{m_{k(i)}^{\mathrm{right}}}\right)^2\leq 2\sum_{i=1}^n\left(\left(\frac{\lambda}{m_{k(i)}^{\mathrm{left}}}\right)^2+\left(\frac{\lambda}{m_{k(i)}^{\mathrm{right}}}\right)^2\right).
\end{equation}

To simplify \eqref{eq:improvement2}, let us introduce an auxillary Lemma:
\begin{lemma}\label{lemma:iterativesum}
For any integer $K\geq 1$ and any $m_1,\cdots,m_K>0$, we have
\begin{equation}\label{eq:iterativesum}
\frac{m_1}{(m_1+\cdots+m_K)^2}+\frac{m_2}{(m_2+\cdots+m_K)^2}+\cdots+\frac{m_{K-1}}{(m_{K-1}+m_K)^2}+\frac{m_K}{m_K^2}\leq \frac{3}{m_k}.
\end{equation}
\end{lemma}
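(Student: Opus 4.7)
The plan is to introduce the partial sums $S_j := m_j + m_{j+1} + \cdots + m_K$ for $j = 1, \ldots, K$, with the convention $S_{K+1} = 0$, so that the left-hand side of \eqref{eq:iterativesum} becomes the clean expression
\[
\Sigma := \sum_{j=1}^{K} \frac{m_j}{S_j^2}, \qquad \text{where } m_j = S_j - S_{j+1}.
\]
The last term ($j = K$) contributes exactly $m_K / S_K^2 = 1/m_K$, so it suffices to bound the remaining sum $\sum_{j=1}^{K-1} m_j / S_j^2$ by $2/m_K$.

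For the remaining terms I would use a telescoping argument. Since $S_{j+1} < S_j$ for $j < K$, we have the elementary inequality
\[
\frac{m_j}{S_j^2} \;\leq\; \frac{m_j}{S_j S_{j+1}} \;=\; \frac{S_j - S_{j+1}}{S_j S_{j+1}} \;=\; \frac{1}{S_{j+1}} - \frac{1}{S_j}.
\]
Summing over $j = 1, \ldots, K-1$ then telescopes:
\[
\sum_{j=1}^{K-1} \frac{m_j}{S_j^2} \;\leq\; \sum_{j=1}^{K-1} \left( \frac{1}{S_{j+1}} - \frac{1}{S_j} \right) \;=\; \frac{1}{S_K} - \frac{1}{S_1} \;\leq\; \frac{1}{m_K}.
\]
Combining with the $j=K$ contribution yields $\Sigma \leq 2/m_K \leq 3/m_K$, which is even slightly stronger than the claimed bound.

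There is no real obstacle here; the only thing to be careful about is the handling of the final term, for which the ``$1/(S_j S_{j+1})$'' trick fails because $S_{K+1} = 0$, so that term must be separated out and bounded directly. The key observation driving the proof is simply that $m_j = S_j - S_{j+1}$ makes the ratio $m_j / S_j^2$ amenable to telescoping once one replaces one factor of $S_j$ in the denominator by the smaller $S_{j+1}$.
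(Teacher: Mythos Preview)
Your proof is correct and in fact sharper than the paper's, giving $2/m_K$ rather than $3/m_K$. The route, however, is genuinely different. The paper argues by an iterative absorption: it first bounds $\frac{m_1}{(m_1+\cdots+m_K)^2}\le \frac{2}{m_2+\cdots+m_K}$, and then shows inductively that
\[
\frac{2}{m_j+\cdots+m_K}+\frac{m_j}{(m_j+\cdots+m_K)^2}\le \frac{2}{m_{j+1}+\cdots+m_K},
\]
so after absorbing the first $K-1$ terms one is left with $2/m_K$, to which the final term $1/m_K$ is added. Your argument instead rewrites $m_j=S_j-S_{j+1}$ and uses the single inequality $m_j/S_j^2\le 1/S_{j+1}-1/S_j$ to telescope the first $K-1$ terms directly to $1/m_K-1/S_1\le 1/m_K$. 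The telescoping is shorter, avoids the auxiliary optimization step in the paper's induction, and immediately yields the improved constant $2$; the paper's inductive scheme, on the other hand, makes explicit how each additional term can be ``absorbed'' into a running tail bound, which is closer in spirit to how the lemma is later applied block by block.
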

\begin{proof}[Proof of Lemma~\ref{lemma:iterativesum}]
The proof is based on induction. First, the first term of the LHS of \eqref{eq:iterativesum} has the upper bound of $\frac{m_1}{(m_1+\cdots+m_K)^2}\leq \frac{2}{(m_2+\cdots+m_K)}$. Second, we have the following bound on the sum of the first two terms of the LHS of \eqref{eq:iterativesum}:
\begin{align*}
&\frac{2}{(m_2+\cdots+m_K)}+\frac{m_2}{(m_2+\cdots+m_K)^2}\leq \frac{1}{m_3+\cdots+m_K}\min_{x}\left(\frac{2}{(x+1)}+\frac{x}{(x+1)^2}\right)\\\leq &\frac{2}{m_3+\cdots+m_K}.
\end{align*}
Similarly, the first three terms of the LHS of \eqref{eq:iterativesum} is bounded by $\frac{2}{m_4+\cdots+m_K}$. Repeat the procedure,   Lemma~\ref{lemma:iterativesum} is proved.
\end{proof}

As for the RHS of \eqref{eq:improvement2} squared over all $1\leq i\leq n$, we define $1\leq k_1< \cdots\leq k_l< K$ to be the locations of the ``change of directions'' in the sense that
\[
\eta_1=\cdots=\eta_{k_1}\neq \eta_{k_1+1}=\cdots=\eta_{k_2}\neq \eta_{k_2+1}=\cdots=\eta_{k_3}\neq \eta_{k_3+1}+\cdots.
\]

Then we have
\begin{align*}
\sum_{i=1}^n\left(\frac{1}{m_{k(i)}^{\mathrm{right}}}\right)^2\leq &\frac{m_1}{(m_1+\cdots+m_{k_1})^2}+\frac{m_2}{(m_2+\cdots,m_{k_1})^2}+\cdots+\frac{m_{k_1}}{(m_{k_1})^2}\\&+
\frac{m_{k_1+1}}{(m_{k_1+1}+\cdots+m_{k_2})^2}+\frac{m_{k_1+2}}{(m_{k_1+2}+\cdots+m_{k_2})^2}+\cdots+\frac{m_{k_2}}{(m_{k_2})^2}+\cdots\\\leq &3\left(\frac{1}{m_{k_1}}+\frac{1}{m_{k_2}}+\cdots+\frac{1}{m_{k_l}}+\frac{1}{m_{K}}\right),
\end{align*}
and similarly,
\[
\sum_{i=1}^n\left(\frac{1}{m_{k(i)}^{\mathrm{right}}}\right)^2\leq  3\left(\frac{1}{m_{1}}+\frac{1}{m_{k_1+1}}+\frac{1}{m_{k_2+1}}+\cdots+\frac{1}{m_{k_l+1}}\right).
\]
As a result, the RHS of \eqref{eq:improvement2} is bounded by
\[
6\left(\frac{1}{m_1}+(\frac{1}{m_{k_1}}+\frac{1}{m_{k_1+1}})+\cdots+(\frac{1}{m_{k_l}}+\frac{1}{m_{k_l+1}})+\frac{1}{m_{K}}\right)=6\sum_{1\leq k\leq K: \eta_{k-1}\neq \eta_k}\frac{1}{m_k}\]
and the improved bound is obtained by combining it with \eqref{eq:improvement1} and \eqref{eq:improvement2}.

\vskip 0.2in

\bibliography{bib-online}

\end{document}